\documentclass[10 pt]{amsart}
\usepackage{amsfonts}
\usepackage{ifthen}
\usepackage{amsthm}
\usepackage{amsmath}
\usepackage{graphicx}
\usepackage{amscd,amssymb,amsthm}
\usepackage{graphicx}
\usepackage{epstopdf}
\usepackage{hyperref}
\usepackage{cleveref}
\usepackage{cite}

\newcounter{minutes}
\setcounter{minutes}{\time}
\divide\time by 60
\newcounter{hours}
\setcounter{hours}{\time}
\multiply\time by 60 \addtocounter{minutes}{-\time}

\setlength{\paperwidth}{210mm} \setlength{\paperheight}{297mm}
\setlength{\oddsidemargin}{0mm} \setlength{\evensidemargin}{0mm}
\setlength{\topmargin}{-20mm} \setlength{\headheight}{10mm}
\setlength{\headsep}{13mm} \setlength{\textwidth}{160mm}
\setlength{\textheight}{240mm} \setlength{\footskip}{0mm}
\setlength{\marginparwidth}{0mm} \setlength{\marginparsep}{0mm}

\newtheorem{lemma}{Lemma}[section]
\newtheorem{theorem}{Theorem}[section]
\newtheorem{corollary}{Corollary}[section]

\newcommand{\real}{\operatorname{Re}}

\keywords{Generalized Struve functions; univalent, starlike and convex functions; radius of starlikeness and convexity; Mittag-Leffler expansions; Laguerre-P\'olya class of entire functions.}
\subjclass[2010]{30C45, 30C15, 33C10}

\begin{document}

\title{Radii of starlikeness and convexity of generalized Struve functions }

\author[E. Toklu]{Evr{\.I}m Toklu}
\address{Department of Mathematics, Faculty of Education, A\u{g}r{\i} {\.I}brah{\i}m \c{C}e\c{c}en University, 04100 A\u{g}r{\i}, Turkey} 
\email{evrimtoklu@gmail.com}

\def\thefootnote{}
\footnotetext{ \texttt{File:~\jobname .tex,
printed: \number\year-0\number\month-\number\day,
\thehours.\ifnum\theminutes<10{0}\fi\theminutes}
} \makeatletter\def\thefootnote{\@arabic\c@footnote}\makeatother

\maketitle

\begin{abstract}
In this paper, it is aimed to determine the radii of starlikeness and convexity of the normalized generalized Struve functions for three different kinds of normalization and to find tight lower and upper bounds for the radius of starlikeness and convexity of these normalized Struve functions by making use of Euler-Rayleigh inequalities. The Laguerre-P\'olya class of entire functions has a crucial role in constructing our main results.
\end{abstract}

\section{\bf Introduction and Prerequisites}
As it is well known, special functions are one of the most powerful tools in the solution of a wide variety of important problems. Because of the fruitful properties of special functions, it is important to examine their properties in many aspects. In the recent years, there has been a vivid interest on geometric properties of special functions from the point of view of geometric function theory, like Bessel, Struve and Lommel functions of the first kind; see the papers \cite{AB,ABO,ABY,ATO,BDOY,BKS,BOS,BSz1,BSz2,BTK} and the references therein. However, it is possible to say that the roots of special functions seen on geometric functions theory are based on the studies of Brown \cite{Brown}, Kreyszig and Todd \cite{KT } and Wilf \cite{Wilf} which initiated investigation on the univalence of Bessel functions and on the determination the radius of starlikeness for different kinds of normalization. Recently, in 2014, Baricz \textit{et al.} \cite{BKS} came up with a much simpler approach for determining the radius of starlikeness of normalized Bessel functions of first kind. In the same year, Baricz and Sz\'{a}sz \cite{BSz2} obtained the radius of convexity of the same normalized Bessel functions. When scrutinized their studies (in particular,  \cite{BKS} and \cite{BSz2}) the main facts pertaining to these studies can be stated as follows: the radii of univalence, starlikeness and convexity are obtained as solutions of some transcendental equations and the obtained radii satisfy some interesting inequalities. In addition, the other main fact is that the positive zeros of Bessel, Struve, Lommel functions of the first kind and the Laguerre-P\'olya class of entire functions have an important place in these papers. Recent years, there has been extensive work on determining some geometric properties of other special functions involving Bessel function of first kind such as univalence, starlikeness, convexity and so forth. For instance, in \cite{DSz}, Deniz and Sz\'asz obtained the radius of uniform convexity of the normalized Bessel functions. And also, Bohra and Ravichandran in \cite{BR} investigated the radius of strong starlikeness and \(k\) uniform convexity of order \(\alpha\) of the normalized Bessel functions.

Motivated by the above series of papers on geometric properties of special functions and using the method of Baricz \textit{et al.} \cite{BKS}, our aim in this paper is to present some similar results for the normalized forms of the generalized Struve functions, which is similar to the generalized Bessel functions of Wright type (and of Galue type).

This paper is organized as follows: The rest of this section contains some basic definitions needed for the proof of our main results. Section 2 is divided into three subsections: The first subsection is devoted to the study on the radii of starlikeness of normalized generalized Struve functions. The second subsection contains the study of the radii of convexity of  normalized generalized Struve functions. The last subsection is allocated to the presentation of making some comparisons our main results given in this paper with obtained results earlier. 

Before starting to present our main results, we would like to draw attention to some basic concepts needed for building our main results. For $r>0$ we denote by $\mathbb{D}_r=\left\{z\in\mathbb{C}: |z|<r\right\}$ the open disk of radius $r$
centered at the origin. Let $f:\mathbb{D}_r\to\mathbb{C}$ be the function defined by
\begin{equation}
f(z)=z+\sum_{n\geq 2}a_{n}z^{n},  \label{eq0}
\end{equation}
where $r$ is less or equal than the radius of convergence of the above power series. Denote by $\mathcal{A}$ the class of allanalytic functions of the form \eqref{eq0}, that is, normalized by the conditions $f(0)=f^{\prime}(0)-1=0.$ We say that the function $f,$ defined by \eqref{eq0}, is starlike function in $\mathbb{D}_r$ if $f$ is univalent in $\mathbb{D}_r$, and the image domain $f(\mathbb{D}_r)$ is a starlike domain in $\mathbb{C}$ with respect to the origin (see \cite{Dur} for more details). Analytically, the function $f$ is starlike in $\mathbb{D}_r$ if and only if $$\real\left( \frac{zf^{\prime }(z)}{f(z)}\right) >0 \quad \mbox{for all}\ \ z\in\mathbb{D}_r.$$ For $\alpha \in [0,1)$ we say that the function $f$ is starlike of order $\alpha $ in $\mathbb{D}_r$ if and only if 
$$\real\left( \frac{zf^{\prime }(z)}{f(z)}\right) >\alpha \quad \mbox{for all}\ \ z\in\mathbb{D}_r.$$
The radius of starlikeness of order $\alpha$ of the function $f$ is defined as the real number
\begin{equation*}
r_{\alpha }^{\star}(f)=\sup \left\{ r>0 \left|\real
\left( \frac{zf^{\prime }(z)}{f(z)}\right)  >\alpha \;\text{for all }z\in
\mathbb{D}_r\right.\right\}.
\end{equation*}
Note that $r^{\star}(f)=r_{0}^{\star}(f)$ is in fact the largest radius such that the image region $f(\mathbb{D}_{r^{\star}(f)})$ is a starlike domain with respect to the origin.

The function $f,$ defined by \eqref{eq0}, is convex in the disk $\mathbb{D}_r$ if $f$ is univalent in $\mathbb{D}_r$, and the image domain $f(\mathbb{D}_r)$ is a convex domain in $\mathbb{C}.$ Analytically, the function $f$ is convex in $\mathbb{D}_r$ if and only if
$$\real\left(  1+\frac{zf^{\prime \prime }(z)}{f^{\prime }(z)}\right)>0  \quad \mbox{for all}\ \ z\in\mathbb{D}_r.$$
For $\alpha \in[0,1)$ we say that the function $f$ is convex of order $\alpha $ in $\mathbb{D}_r$ if and only if 
$$\real\left( 1+\frac{zf^{\prime \prime }(z)}{f^{\prime }(z)}\right)
>\alpha \quad \mbox{for all}\ \ z\in\mathbb{D}_r.$$ 
We shall denote the radius of convexity of order $\alpha $ of the function $f$ by the real number
\begin{equation*}
r_{\alpha }^{c}(f)=\sup \left\{ r>0 \left|\real\left( 1+
\frac{zf^{\prime \prime }(z)}{f^{\prime }(z)}\right) >\alpha \;\text{for all }
z\in\mathbb{D}_r\right.\right\} .
\end{equation*}
Note that $r^{c}(f)=r_{0}^{c}(f)$ is the largest radius such that the image region $f(\mathbb{D}_{r^{c}(f)})$ is a convex domain.

We recall that a real entire function $q$ belongs to the  Laguerre-P\'{o}lya class $\mathcal{LP}$ if it can be represented in the form $$q(x)=cx^{m}e^{-ax^2+bx}\prod_{n\geq1}\left(1+\frac{x}{x_n}\right)e^{-\frac{x}{x_n}},$$ with $c,b,x_n\in\mathbb{R}, a\geq0, m\in\mathbb{N}_0$ and $\sum1/{x_n}^2<\infty.$ We note that the class $\mathcal{LP}$ is the complement of the space of polynomials whose zeros are all real in the topology induced by the uniform convergence on the compact sets of the complex plane of polynomials with only real zeros. For more details on the class $\mathcal{LP}$ we refer to \cite[p. 703]{DC} and to the references therein.

Finally, let us take a look at some lemmas which are very useful in building our main results.

\begin{lemma}[ see \cite{DSz}]\label{lem}
If  $a>b>r\geq \left|z\right|,$ and $ \lambda\in [0,1] $, then
\begin{equation}\label{1.2}
\left|\frac{z}{b-z}-\lambda\frac{z}{a-z}\right|\leq\frac{r}{b-r}-\lambda\frac{r}{a-r}.
\end{equation}
The followings can be obtained as a natural consequence of this inequality:
\begin{equation}\label{lemineq1}
\real\left(\frac{z}{b-z}-\lambda\frac{z}{a-z}\right)\leq\frac{r}{b-r}-\lambda\frac{r}{a-r}
\end{equation}
and 
\begin{equation}\label{lemineq2}
\real\left(\frac{z}{b-z}\right)\leq\left|\frac{z}{b-z}\right|\leq\frac{r}{b-r}.
\end{equation}
\end{lemma}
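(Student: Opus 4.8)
The plan is to reduce the modulus estimate \eqref{1.2} to the clean single-term bound \eqref{lemineq2} together with an elementary monotonicity argument. The one real difficulty here is the minus sign: applying the triangle inequality directly to $\frac{z}{b-z}-\lambda\frac{z}{a-z}$ would convert the subtraction into an addition and yield a bound strictly larger than the claimed right-hand side (which is itself a \emph{difference}). So the crux is to first rewrite the left-hand expression so that, after the triangle inequality, the resulting terms recombine \emph{exactly} into the right-hand side, losing no cancellation.

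Concretely, I would begin from the algebraic identity
\[
\frac{z}{b-z}-\lambda\frac{z}{a-z}=(1-\lambda)\frac{z}{b-z}+\lambda\,\frac{(a-b)z}{(b-z)(a-z)},
\]
which follows by writing $\lambda\frac{z}{a-z}=\lambda\frac{z}{b-z}-\lambda\bigl(\frac{z}{b-z}-\frac{z}{a-z}\bigr)$ and using $\frac{z}{b-z}-\frac{z}{a-z}=\frac{(a-b)z}{(b-z)(a-z)}$. The point of this rearrangement is that the coefficients $1-\lambda$ and $\lambda$ are nonnegative and the factor $a-b$ is positive, so applying the triangle inequality now costs nothing. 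Estimating the denominators by $|b-z|\ge b-|z|$ and $|a-z|\ge a-|z|$, exactly as in \eqref{lemineq2}, gives
\[
\left|\frac{z}{b-z}-\lambda\frac{z}{a-z}\right|\le(1-\lambda)\frac{|z|}{b-|z|}+\lambda\,\frac{(a-b)|z|}{(b-|z|)(a-|z|)}.
\]
Each term on the right is an increasing function of $|z|$ on $[0,r]$ (the first because $\rho\mapsto\rho/(b-\rho)$ is increasing, the second because its numerator increases while its denominator decreases), and $|z|\le r$; hence I may replace $|z|$ by $r$. Reading the displayed identity backwards with $z=r$ then collapses this bound back into $\frac{r}{b-r}-\lambda\frac{r}{a-r}$, which is precisely \eqref{1.2}.

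The two consequences require no further work: \eqref{lemineq1} follows from $\real(w)\le|w|$ applied to $w=\frac{z}{b-z}-\lambda\frac{z}{a-z}$ together with \eqref{1.2}, and \eqref{lemineq2} is the special case $\lambda=0$, for which the decomposition is trivial and one only needs $|b-z|\ge b-|z|\ge b-r$. I expect the only genuine obstacle to be the opening rearrangement into manifestly positive pieces; after that, the argument is the routine monotonicity estimate sketched above.
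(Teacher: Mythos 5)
Your proof is correct. Note that the paper itself offers no proof of this lemma at all---it is quoted from the reference [DSz] (Deniz and Sz\'asz)---so there is nothing internal to compare against; your argument, based on the rearrangement
\[
\frac{z}{b-z}-\lambda\frac{z}{a-z}=(1-\lambda)\frac{z}{b-z}+\lambda\,\frac{(a-b)z}{(b-z)(a-z)}
\]
into manifestly nonnegative pieces, followed by the triangle inequality, the bounds $|b-z|\ge b-|z|\ge b-r$ and $|a-z|\ge a-|z|\ge a-r$, and recombination at $z=r$, is precisely the standard proof given in that cited literature. The only remark worth making is that your intermediate monotonicity step is not strictly needed: once the terms are nonnegative you may bound $|z|\le r$ in the numerators and $|b-z|\ge b-r$, $|a-z|\ge a-r$ in the denominators in one stroke, which shortens the argument without changing its substance.
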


The result of Runckel \cite[Thm. 4]{Runckel}, stated in \Cref{Runckel}, play a key role in proving the reality of zeros of some special functions. For interesting applications of this lemma one can refer to the study of Baricz and Sanjeev \cite{BSan}.
\begin{lemma}[Runckel, 1969]\label{Runckel}
If $f(z)=\sum_{n\geq 0}a_nz^n$ can be represented by $f(z)=e^{az^2}h(z),$ where $a\leq0$ and $h$ is of form
\[h(z)=ce^{bz}\prod_{n\geq 1}\left( 1-\frac{z}{c_n}\right)e^{\frac{z}{c_n}}, \text{ \ \ }a,b\in\mathbb{R}, \sum_{n \geq 1}\left| c_n \right|^{-2}<\infty,   \]
$f$ has real zeros only (or no zeros at all), and $G$ is of form
\[ G(z)=e^{\beta z}\prod_{n\geq1}\left(1+\frac{z}{\alpha_n}\right)e^{-\frac{z}{\alpha_n}}, \text{ \ \ }\alpha_n>0, \beta\in\mathbb{R}, \sum_{n \geq 1}\alpha_n^{-2}<\infty,  \]
then the function $\sum_{n\geq0}a_nG(n)z^{n}$ has real zeros only.
\end{lemma}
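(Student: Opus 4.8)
The plan is to read the statement through the classical theory of multiplier sequences and to prove it by reducing the entire-function assertion to the polynomial case. Writing $\theta=z\frac{d}{dz}$ for the Euler operator, one has $\theta z^{n}=nz^{n}$, and hence
\[
\sum_{n\geq0}a_{n}G(n)z^{n}=G(\theta)f(z),
\]
where $G(\theta)$ denotes the operator obtained by substituting $\theta$ for the variable in the Weierstrass factorization of $G$. The task is therefore to show that $G(\theta)$ maps real entire functions with only real zeros to functions of the same kind; equivalently, in the language of P\'olya and Schur, that $\{G(n)\}_{n\geq0}$ is a \emph{multiplier sequence}. Note that $f$ is admissible: the hypotheses $a\leq0$ and the reality of the zeros $c_{n}$ (forced by $f$ having only real zeros) place $f$ in the Laguerre-P\'olya class $\mathcal{LP}$.

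First I would factor $G(\theta)$ into elementary pieces according to the Weierstrass product for $G$. The exponential prefactor yields $e^{\beta\theta}$, which on monomials acts by $z^{n}\mapsto(e^{\beta})^{n}z^{n}$, i.e.\ as the dilation $z\mapsto e^{\beta}z$ by the positive number $e^{\beta}$; likewise each convergence factor $e^{-\theta/\alpha_{n}}$ is the positive dilation by $e^{-1/\alpha_{n}}$. Dilations by positive reals plainly preserve reality of zeros. The only substantive factors are the Laguerre factors $1+\theta/\alpha_{n}=\tfrac{1}{\alpha_{n}}(\alpha_{n}+z\tfrac{d}{dz})$ with $\alpha_{n}>0$, sending $f$ to $\tfrac{1}{\alpha_{n}}(\alpha_{n}f+zf')$. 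That this preserves real-rootedness is the classical Laguerre theorem: the generating function $\sum_{n\geq0}(\alpha_{n}+n)\tfrac{z^{n}}{n!}=(\alpha_{n}+z)e^{z}$ lies in $\mathcal{LP}$ with the single negative zero $-\alpha_{n}$, so $\{\alpha_{n}+n\}$ is a multiplier sequence; one may also argue directly from the identity $\alpha_{n}f+zf'=z^{1-\alpha_{n}}\frac{d}{dz}\!\left(z^{\alpha_{n}}f\right)$ together with Rolle's theorem. Since multiplier sequences are closed under pointwise products, every finite partial product of $G$ induces a multiplier sequence.

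The remaining and principal difficulty is the passage from polynomials and finite products to $f$ itself and to the infinite product $G$, which involves two limits that I would control at the level of the transformed functions. Because $f\in\mathcal{LP}$, by the definition of the class it is the locally uniform limit of polynomials $P_{k}$ with only real zeros. If $G_{N}$ is the $N$-th partial product of the Weierstrass expansion of $G$ (prefactor included), the finite multiplier sequence $\{G_{N}(n)\}$ sends each $P_{k}$ to a real-rooted polynomial; letting $k\to\infty$ and applying Hurwitz's theorem shows that $G_{N}(\theta)f=\sum_{n\geq0}a_{n}G_{N}(n)z^{n}$ has only real zeros. One then verifies that $G_{N}\to G$ locally uniformly with sufficient growth control---the hypothesis $\sum_{n\geq1}\alpha_{n}^{-2}<\infty$ guaranteeing both the convergence of the product and the uniform estimates on compacta---so that $G_{N}(\theta)f\to\sum_{n\geq0}a_{n}G(n)z^{n}$ locally uniformly on $\mathbb{C}$. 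A second application of Hurwitz's theorem then transfers reality of the zeros to the limit, since a locally uniform limit of functions with only real zeros either has only real zeros or is identically zero, the latter being ruled out because the normalization leaves a genuine nonzero entire function. I expect the delicate point to be exactly this analytic limiting step---securing local uniform convergence of $G_{N}(\theta)f$ and excluding both the escape of zeros to infinity and the degenerate vanishing of the limit---rather than the algebra of the elementary factors, which is entirely classical.
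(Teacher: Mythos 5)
The paper never proves this lemma: it is imported wholesale as Theorem 4 of Runckel's 1969 paper, so there is no internal proof to compare yours against, and your blind attempt is necessarily a different route --- a self-contained derivation along classical Laguerre/P\'olya--Schur lines. As a proof sketch it is correct. The factorization of $G(\theta)$ into positive dilations (from $e^{\beta\theta}$ and the convergence factors $e^{-\theta/\alpha_n}$) and Laguerre factors $1+\theta/\alpha_n$, the closure of multiplier sequences under pointwise products (which is just composition of the corresponding diagonal operators), and the two Hurwitz limits are all sound. Two points should be made concrete to finish it. First, the step you flag as delicate --- uniform control in the limit $N\to\infty$ --- is settled not by $\sum_{n\ge1}\alpha_n^{-2}<\infty$ (that hypothesis only makes $G$ itself well defined) but by the elementary inequality $(1+x)e^{-x}\le 1$ for $x\ge 0$: since the $\alpha_j$ are positive and the evaluation points $n$ are nonnegative integers, every Weierstrass factor of the partial product $G_N$ evaluated at $n$ lies in $(0,1]$, whence $0<G_N(n)\le e^{\beta n}$ uniformly in $N$; because $f$ is entire, $\sum_n |a_n|e^{\beta n}R^n<\infty$ for every $R>0$, and a dominated-convergence/tail estimate then yields the locally uniform convergence of $\sum_n a_nG_N(n)z^n$ to $\sum_n a_nG(n)z^n$ that Hurwitz's theorem requires; mere locally uniform convergence $G_N\to G$ on compacta would not suffice, since what matters is growth in $n$, not uniformity in $z$. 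Second, the degenerate case in Hurwitz is excluded because $G(n)>0$ for every $n\ge0$ (all zeros of $G$ sit at $-\alpha_n<0$), so $a_nG(n)\ne0$ whenever $a_n\ne0$; this is the precise content of your vague ``normalization'' remark. With these two observations inserted your argument is complete, and it has the merit of making transparent the black box that the paper simply cites; what the citation buys the author instead is brevity and coverage of Runckel's more general setting.
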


\section{\bf The radii of starlikeness and convexity of generalized Struve functions}
\setcounter{equation}{0}
In this section we focus on the generalized Struve functions, which is similar to the generalized Bessel functions of Wright type (and of Galue type), defined as
$$_{q}W_{p,b,c,\delta}(z)=\sum_{n\geq 0}\frac{(-1)^{n}c^n}{n!\Gamma(qn+\frac{p}{\delta}+\frac{b+2}{2})}\left( \frac{z}{2}\right) ^{2n+p+1}, \quad (q\in \mathbb{N};\quad p,b,c\in \mathbb{C})$$
where $\delta>0.$

It is clear that we can derive a number of well-known special functions from  generalized Struve function for some values of the parameters. Some of them is as follows:
$${}_{1}W_{\nu-1,2,1,1}(z)=J_{\nu}(z),$$
where $J_{\nu}(z)$ is Bessel function of first kind \cite{Wat}.
$${}_{q}W_{2\nu+2\lambda-1,1,1,2}(z)=\left( \frac{z}{2}\right)^{\nu} \frac{\Gamma(\lambda+n+1)}{\Gamma(1+n)}J_{\nu,\lambda}^{q}(z),$$
where $J_{\nu,\lambda}^{q}(z)$ is Bessel-maitland function \cite{Pathak}.
$${}_{q}W_{p-1,1,-1,1}(z)={}_{q}I_{p}(z),$$
where ${}_{q}I_{p}(z)$ is the Galue type generalization of modified Bessel function \cite{Baricz}.

It is obvious that the function $z\mapsto{}_{q}W_{p,b,c,\delta}(z)$ does not belong to the class $\mathcal{A},$ and thus first we perform some natural normalization. We define three functions originating from $_{q}W_{p,b,c,\delta}(.)$

\begin{align*}
{}_{q}f_{p,b,c,\delta}(z)=& \bigg(2^{p+1}\Gamma(\frac{p}{\delta}+\frac{b+2}{2}){}_{q}W_{p,b,c,\delta}(z)\bigg)^{\frac{1}{p+1}}, \\
{}_{q}g_{p,b,c,\delta}(z)=&2^{p+1} \Gamma(\frac{p}{\delta}+\frac{b+2}{2})z^{-p}{}_{q}W_{p,b,c,\delta}(z),\\
{}_{q}h_{p,b,c,\delta}(z)=& 2^{p+1}\Gamma(\frac{p}{\delta}+\frac{b+2}{2})z^{1-\frac{p+1}{2}}{}_{q}W_{p,b,c,\delta}(\sqrt{z}).
\end{align*}
Observe that these functions belong to the class  $\mathcal{A}.$ No doubt it is possible to write infinitely many other normalizations; the main motivation to consider the above ones is the fact that their particular cases in terms of Bessel functions appear in the literature or are similar to the studied normalization in the literature. In this study, we are going to present some intriguing results on these three functions by using some basic techniques of geometric function theory.

The following lemma, which we believe is of independent interest, has a crucial role in the proof of our main results.
\begin{lemma} \label{mainlemma}
Let $\delta,q,b,c>0$ and $p+1>0$. Then $z \mapsto {}_{q}W_{p,b,c,\delta}(z) $ possesses infinitely many zeros which are all real. Denoting by ${}_{q}\omega_{p,b,c,n}$ the $n$th positive zero of $_{q}W_{p,b,c,\delta}(z)$, under the same conditions the Weierstrassian decomposition
\begin{equation}\label{mainlemma1}
2^{p+1}\Gamma(\frac{p}{\delta}+\frac{b+2}{2}){}_{q}W_{p,b,c,\delta}(z)=z^{p+1}\prod_{n\geq 1}\left(1-\frac{z^2}{{}_{q}\omega_{p,b,c,\delta,n}^2}\right) 
\end{equation}
is fulfilled, and this product is uniformly convergent on compact subsets of the complex
plane. Moreover, if we denote by ${}_{q}\omega_{p,b,c,\delta,n}^{\prime}$ the nth positive zero of ${}_{q}W_{p,b,c,\delta}^{\prime}(z)$, then positive zeros of ${}_{q}W_{p,b,c,\delta}$ are interlaced with those of ${}_{q}W_{p,b,c,\delta}^{\prime}.$ In other words, the zeros satisfy the chain of inequalities
$${}_{q}\omega_{p,b,c,\delta,1}^{\prime}<{}_{q}\omega_{p,b,c,\delta,1}<{}_{q}\omega_{p,b,c,\delta,2}^{\prime}<{}_{q}\omega_{p,b,c,\delta,2}<\dots_{.}$$
\end{lemma}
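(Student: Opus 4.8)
The plan is to establish the three assertions—reality of all zeros, the Weierstrassian product, and the interlacing—in that order, since each one feeds the next.

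First I would prove that $z\mapsto {}_{q}W_{p,b,c,\delta}(z)$ has only real zeros by applying \Cref{Runckel}. Writing out the series, the entire function $z\mapsto {}_{q}W_{p,b,c,\delta}(z)$ is, after dividing by the factor $z^{p+1}$, an even function of $z$, so it is natural to set $w=z^2$ (more precisely $w=(z/2)^2$) and study the function $\Phi(w)=\sum_{n\geq 0}\frac{(-c)^n}{n!\,\Gamma(qn+\frac{p}{\delta}+\frac{b+2}{2})}w^{n}$. The idea is to exhibit $\Phi$ as obtained from a function with only real zeros by multiplying the $n$th coefficient by $G(n)$ for a suitable $G$ in the Laguerre--P\'olya class. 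Here the entire function $z\mapsto e^{-cw}$ or the reciprocal-Gamma function $1/\Gamma$ supplies the $\mathcal{LP}$-multiplier $G$, because $1/\Gamma$ is a classical member of $\mathcal{LP}$ with the product representation demanded in the hypothesis of \Cref{Runckel}. I would take $f(z)$ in Runckel's lemma to be (a rescaling of) the exponential-type generating series whose zeros are manifestly real, choose $G(n)=1/\Gamma(qn+\frac{p}{\delta}+\frac{b+2}{2})$, verify the required form $G(z)=e^{\beta z}\prod(1+z/\alpha_n)e^{-z/\alpha_n}$ with $\alpha_n>0$ and $\sum\alpha_n^{-2}<\infty$, and conclude that $\Phi(w)$ has only real zeros. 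Since $w=z^2$ up to a positive constant and the coefficients alternate in sign (because $c>0$), every real zero $w$ is in fact positive, so the zeros in the $z$-plane come in pairs $\pm\sqrt{w}$ and are all real. Infinitude of the zeros follows from the order of the entire function $\Phi$ being positive and finite (it is $1/q$), so it cannot be a polynomial or have finitely many zeros.

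Once reality and positivity of the zeros $\{{}_{q}\omega_{p,b,c,\delta,n}^2\}$ are known, the Weierstrassian decomposition \eqref{mainlemma1} is essentially a consequence of the Hadamard factorization theorem. The function on the left of \eqref{mainlemma1} is entire of order $1/q<1$ and hence of genus $0$; its only zeros are $\pm{}_{q}\omega_{p,b,c,\delta,n}$ together with the zero of order $p+1$ at the origin, and the value $1$ of the normalized series at $z=0$ fixes the leading constant. Pairing the factors for $+\omega$ and $-\omega$ yields the quadratic factors $\bigl(1-z^2/{}_{q}\omega_{p,b,c,\delta,n}^2\bigr)$, and $\sum {}_{q}\omega_{p,b,c,\delta,n}^{-2}<\infty$ (which also follows from the order being $<1$) guarantees uniform convergence on compact sets, so no exponential factors are needed.

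For the interlacing claim I would argue that a real entire function with only real zeros whose logarithmic derivative is decreasing between consecutive zeros forces the zeros of the derivative to separate those of the function. Concretely, the even function $z\mapsto z^{-(p+1)}\,{}_{q}W_{p,b,c,\delta}(z)$ lies (after the substitution $w=z^2$) in $\mathcal{LP}$, and it is a classical fact that the class $\mathcal{LP}$ is closed under differentiation and that the zeros of $q$ and $q'$ interlace for $q\in\mathcal{LP}$. Applying this to the function and transferring back through the chain rule relation between the zeros of ${}_{q}W_{p,b,c,\delta}$ and those of ${}_{q}W_{p,b,c,\delta}^{\prime}$ yields the stated chain of inequalities. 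The main obstacle I anticipate is the first step: one must massage the defining series into exactly the hypotheses of \Cref{Runckel}, verifying carefully that the multiplier coming from $1/\Gamma(qn+\frac{p}{\delta}+\frac{b+2}{2})$ is the restriction to the integers of a genuine $\mathcal{LP}$-function of the required Runckel form, and that the seed function $f$ has only real zeros; the Hadamard factorization and the interlacing are then comparatively routine applications of standard facts about order and about $\mathcal{LP}$.
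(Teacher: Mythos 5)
Your first two steps---reality of the zeros via Runckel's lemma with the multiplier $G(w)=1/\Gamma\bigl(qw+\frac{p}{\delta}+\frac{b+2}{2}\bigr)$ and seed $e^{-cw}$, followed by a genus-zero Hadamard factorization with the $\pm$ zeros paired---are correct and are essentially the paper's own argument; working in the variable $w=(z/2)^{2}$ is in fact slightly cleaner than the paper's presentation, since it makes explicit why the zeros of $\Phi$ are positive (alternating coefficients), a point needed to pass from real zeros in $w$ to real zeros in $z$. Two small corrections there: the order of $\Phi$ is $1/(1+q)$, not $1/q$ (the factor $n!$ contributes on the same footing as $\Gamma(qn+\cdot)$), and positivity and finiteness of the order alone do not force infinitely many zeros (consider $e^{w}$); what you need is that the order is finite and \emph{not an integer}---which $1/(1+q)\in(0,1)$ is---so that a factorization $\Phi=Pe^{g}$ with $P,g$ polynomials is impossible.

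The genuine gap is in the interlacing step. Writing $2^{p+1}\Gamma\bigl(\frac{p}{\delta}+\frac{b+2}{2}\bigr)\,{}_{q}W_{p,b,c,\delta}(z)=z^{p+1}\Psi(z^{2})$ with $\Psi(w)=\prod_{n\geq1}\left(1-w/w_{n}\right)$, $w_{n}={}_{q}\omega_{p,b,c,\delta,n}^{2}$, the chain rule gives
\begin{equation*}
2^{p+1}\Gamma\bigl(\tfrac{p}{\delta}+\tfrac{b+2}{2}\bigr)\,{}_{q}W_{p,b,c,\delta}'(z)=z^{p}\Bigl[(p+1)\Psi(z^{2})+2z^{2}\Psi'(z^{2})\Bigr],
\end{equation*}
so the positive zeros of ${}_{q}W_{p,b,c,\delta}'$ are the square roots of the zeros of $(p+1)\Psi(w)+2w\Psi'(w)$, \emph{not} of $\Psi'(w)$. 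Hence the classical interlacing of the zeros of $\Psi$ and $\Psi'$ cannot simply be ``transferred back through the chain rule''; in fact that transfer yields a demonstrably wrong picture, because $\Psi'/\Psi=-\sum_{n\geq1}1/(w_{n}-w)<0$ on $(0,w_{1})$, so $\Psi'$ has \emph{no} zero before $w_{1}$, whereas the lemma asserts that ${}_{q}W_{p,b,c,\delta}'$ has a zero ${}_{q}\omega_{p,b,c,\delta,1}'<{}_{q}\omega_{p,b,c,\delta,1}$. The correct argument---the paper's, and the one your own opening sentence about decreasing logarithmic derivatives actually points to---is to use the Mittag-Leffler expansion obtained from the product,
\begin{equation*}
\frac{{}_{q}W_{p,b,c,\delta}'(z)}{{}_{q}W_{p,b,c,\delta}(z)}=\frac{p+1}{z}-\sum_{n\geq1}\frac{2z}{{}_{q}\omega_{p,b,c,\delta,n}^{2}-z^{2}},
\end{equation*}
note that its derivative is negative on the positive real axis away from the zeros, and conclude that it decreases from $+\infty$ to $-\infty$ on $(0,{}_{q}\omega_{p,b,c,\delta,1})$ and on each interval $({}_{q}\omega_{p,b,c,\delta,n},{}_{q}\omega_{p,b,c,\delta,n+1})$, so that ${}_{q}W_{p,b,c,\delta}'$ vanishes exactly once in each of these intervals. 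With that replacement for your third step, the proof is complete.
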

\begin{proof}
Let us prove the reality of zeros of the function ${} _{q}W_{p,b,c,\delta}(z).$ Consider the entire function 
$${}_{q}W_{p,b,c,\delta}(z)=\left(\frac{z}{2} \right)^{p+1} \sum_{n\geq0}\frac{(-1)^{n} c^n}{n!\Gamma(qn+\frac{p}{\delta}+\frac{b+2}{2})}\left(\frac{z}{2} \right)^{2n} .$$
The function ${}_{q}G_{p,b,c,\delta}:\left[0, \infty \right) \rightarrow \mathbb{R}$ defined by
$${}_{q}G_{p,b,c,\delta}(z)=\frac{1}{\Gamma(qz+\frac{p}{\delta}+\frac{b+2}{2})}$$
is entire function and of growth order $1,$ belongs to $\mathcal{LP}.$ If we choose $f(z)=e^{-c\left(\tfrac{z}{2}\right) ^2},$ then by making use of the Runckel's above-mentioned result (that is, \Cref{Runckel}) we obtain that the function ${}_{q}W_{p,b,c,\delta}(z)$ has real zeros only if $\delta,q,b,c>0$ and $p+1>0.$ Moreover, the growth order of the generalized Struve function is calculated in the following:
\begin{align*}
\rho&=\limsup_{n\rightarrow \infty}\frac{n \log n}{-\log\left|c_n \right| }=\limsup_{n\rightarrow \infty}\frac{n \log n}{-\log\left|\frac{(-c)^{n}}{n!\Gamma(qn+\frac{p}{\delta}+\frac{b+2}{2})}\right|} \\
&=\limsup_{n\rightarrow \infty}\frac{1}{-\frac{n \log \left| c\right| }{n log n}+\frac{\log \Gamma(n+1) }{n log n}+\frac{\log \Gamma(qn+\frac{p}{\delta}+\frac{b+2}{2})}{n log n}}\\
&=\frac{1}{1+q}.
\end{align*}
It is evident that $0<\rho=\frac{1}{1+q}<1,$ for $q>0.$ It is well known that if the finite growth order $\rho$ of an entire function is not an integer, then the function has infinitely many zeros. In that case, by virtue of the Hadamard theorem on growth order of the entire function, it follows that its infinite product representation is exactly what we have in \Cref{mainlemma}. Taking into account the infinite product representation we get
\begin{equation}\label{Realzeros1}
\frac{{}_{q}W_{p,b,c,\delta}'(z)}{_{q}W_{p,b,c,\delta}(z)}=\frac{p+1}{z}-\sum_{n \geq 1}\frac{2z}{{}_{q}\omega_{p,b,c,\delta,n}^2 -z^2}.
\end{equation}
Diferentiating both sides of \eqref{Realzeros1} we arrive at
$$\frac{d}{dz}\left(\frac{_{q}W_{p,b,c,\delta}'(z)}{_{q}W_{p,b,c,\delta}(z)} \right)=-\frac{p+1}{z^2}-2\sum_{n \geq 1}\frac{{}_{q}\omega_{p,b,c,\delta,n}^2+z^2}{\left( {}_{q}\omega_{p,b,c,\delta,n}^2-z^2\right) ^2}.$$
Since the expression on the right-hand side is real and negative for $z$ real, the quotient $\frac{_{q}W_{p,b,c,\delta}'(z)}{_{q}W_{p,b,c,\delta}(z)}$ is a strictly decreasing function from $+\infty$ to $-\infty$ as $z$ increases through real values over the open interval $({}_{q}\omega_{p,b,c,\delta,n}, {}_{q}\omega_{p,b,c,\delta,n+1}) \text{ \ \ } n\in \mathbb{N}.$ That is to say that the function $_{q}W_{p,b,c,\delta}'(z)$ vanishes just once between two consecutive zeros of the function $_{q}W_{p,b,c,\delta}(z).$

The lemma is proved.
\end{proof}
\subsection{The radii of starlikeness of order $\alpha$ of the functions ${}_{q}f_{p,b,c,\delta}$, ${}_{q}g_{p,b,c,\delta}$ and ${}_{q}h_{p,b,c,\delta}$.}
Our first main result is related to the radii of starlikeness of these three normalized generalized Struve functions. In other words, the aim of this section is to determine the radii of starlikeness of order \(\alpha \) of the normalized Struve functions and to give tight lower and upper bounds for radii of starlikeness of the normalized Struve functions.

\begin{theorem}\label{MainTheo1}
Let $\delta,q,b,c>0$, $p+1>0$ and $\alpha\in\left[ 0,1\right).$ Then the following assertions are true.
\item [\bf a.] The radius of starlikeness of order $\alpha$ of the function $_{q}f_{p,b,c,\delta}$ is $r^{\star}_{\alpha}(_{q}f_{p,b,c,\delta})={}_{q}x_{p,b,c,\delta,1},$ where ${}_{q}x_{p,b,c,\delta,1}$ is the smallest zero of the equation
\[  r {}_{q}W_{p,b,c,\delta}'(r)-\alpha(p+1){}_{q}W_{p,b,c,\delta}(r)=0  .\]
\item[\bf b.] The radius of starlikeness of order $\alpha$ of the function $_{q}g_{p,b,c,\delta}$ is $r^{\star}_{\alpha}(_{q}g_{p,b,c,\delta})={}_{q}y_{p,b,c,\delta,1},$ where ${}_{q}y_{p,b,c,\delta,1}$ is the smallest zero of the equation
\[ r {}_{q}W_{p,b,c,\delta}'(r)-(\alpha+p){}_{q}W_{p,b,c,\delta}(r)=0 .\]
\item[\bf c.] The radius of starlikeness of order $\alpha$ of the function $_{q}h_{p,b,c,\delta}$ is $r^{\star}_{\alpha}(_{q}h_{p,b,c,\delta})={}_{q}z_{p,b,c,\delta,1},$ where ${}_{q}z_{p,b,c,\delta,1}$ is the smallest zero of the equation
\[\sqrt{r}{}_{q}W_{p,b,c,\delta}'(\sqrt{r})-2\big(\alpha+\frac{p+1}{2}-1\big){}_{q}W_{p,b,c,\delta}(\sqrt{r})=0. \]
\end{theorem}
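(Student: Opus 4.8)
The plan is to reduce all three parts to a single mechanism supplied by the Weierstrassian factorization of \Cref{mainlemma}. Write $W(z)$ for $_{q}W_{p,b,c,\delta}(z)$, $\omega_n$ for its $n$th positive zero, $\Phi(z)=2^{p+1}\Gamma(\tfrac{p}{\delta}+\tfrac{b+2}{2})W(z)=z^{p+1}\prod_{n\ge1}(1-z^2/\omega_n^2)$, and $F,G,H$ for the three normalizations $_{q}f_{p,b,c,\delta},{}_{q}g_{p,b,c,\delta},{}_{q}h_{p,b,c,\delta}$. First I would record that, near the origin (with the branch making $F(z)\sim z$ for $F$), these become honest elements of $\mathcal{A}$ with the clean product forms
$$F(z)=\Big(z^{p+1}\prod_{n\ge1}\big(1-\tfrac{z^2}{\omega_n^2}\big)\Big)^{\frac{1}{p+1}},\quad G(z)=z\prod_{n\ge1}\big(1-\tfrac{z^2}{\omega_n^2}\big),\quad H(z)=z\prod_{n\ge1}\big(1-\tfrac{z}{\omega_n^2}\big),$$
whence logarithmic differentiation gives term by term
$$\frac{zF'(z)}{F(z)}=1-\frac{1}{p+1}\sum_{n\ge1}\frac{2z^2}{\omega_n^2-z^2},\quad \frac{zG'(z)}{G(z)}=1-\sum_{n\ge1}\frac{2z^2}{\omega_n^2-z^2},\quad \frac{zH'(z)}{H(z)}=1-\sum_{n\ge1}\frac{z}{\omega_n^2-z}.$$

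The heart of the matter is to show that on each circle $|z|=r$ the real part of these quotients is minimized on the positive real axis. For $H$ this is immediate from \eqref{lemineq2} of \Cref{lem} with $b=\omega_n^2$, giving $\real\big(z/(\omega_n^2-z)\big)\le r/(\omega_n^2-r)$ whenever $r=|z|<\omega_1^2$. For $F$ and $G$ the same estimate applies after the substitution $w=z^2$: since $|w|=r^2$ and $\omega_n^2>r^2$ whenever $r<\omega_1$, inequality \eqref{lemineq2} yields $\real\big(z^2/(\omega_n^2-z^2)\big)=\real\big(w/(\omega_n^2-w)\big)\le r^2/(\omega_n^2-r^2)$. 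Summing over $n$, I obtain for $r<\omega_1$ (resp. $r<\omega_1^2$) the pointwise bound $\real\big(zF'(z)/F(z)\big)\ge rF'(r)/F(r)$, and likewise for $G$ and $H$, with equality attained at $z=r$.

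It then remains to study each real-variable function $r\mapsto rF'(r)/F(r)$ on $(0,\omega_1)$ (resp. $(0,\omega_1^2)$ for $H$). Each summand $r\mapsto 2r^2/(\omega_n^2-r^2)$, resp. $r/(\omega_n^2-r)$, is strictly increasing on the admissible interval, so the quotient strictly decreases from the value $1$ at $r=0$ to $-\infty$ as $r$ approaches the first singularity; this parallels the monotonicity of $W'/W$ computed in the proof of \Cref{mainlemma}. Hence each quotient meets the level $\alpha\in[0,1)$ exactly once, and that crossing point is precisely the claimed radius of starlikeness of order $\alpha$, since for smaller radii the real part stays above $\alpha$ by the inequality of the previous paragraph while at the crossing the real-axis value equals $\alpha$. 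Finally I would convert each crossing condition into the stated transcendental equation by unwinding the logarithmic derivative: $zF'/F=\alpha$ is equivalent to $\tfrac{1}{p+1}\,z\Phi'(z)/\Phi(z)=\alpha$, i.e. $zW'(z)=\alpha(p+1)W(z)$; similarly $zG'/G=-p+z\Phi'/\Phi$ and $zH'/H=(1-\tfrac{p+1}{2})+\tfrac12\sqrt z\,\Phi'(\sqrt z)/\Phi(\sqrt z)$ reproduce the equations of parts (b) and (c) because $\Phi'/\Phi=W'/W$.

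The main obstacle I anticipate is justifying that the crossing point actually lies below the first zero $\omega_1$ (resp. $\omega_1^2$), so that the term-by-term estimates remain valid throughout the disk. This is where the interlacing of the zeros of $W$ and $W'$ recorded in \Cref{mainlemma} is essential: it guarantees $_{q}\omega'_{p,b,c,\delta,1}<\omega_1$ and hence that the smallest root of each transcendental equation is reached before any pole of the logarithmic derivative. A secondary point requiring care is the branch choice and analyticity of $F$ on a zero-free neighborhood of the origin, together with checking that the coefficients $\alpha(p+1)$, $\alpha+p$ and $2(\alpha+\tfrac{p+1}{2}-1)$ carry the correct sign so that the strictly decreasing quotient genuinely attains the value $\alpha$ inside the admissible interval.
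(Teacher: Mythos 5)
Your proposal is correct and follows essentially the same route as the paper: the Weierstrass factorization from \Cref{mainlemma}, logarithmic differentiation of the three normalizations, the real-part bound \eqref{lemineq2} of \Cref{lem} (applied to the $z^2$, resp. $z$, summands), and strict monotonicity of the resulting real quotients from $1$ down to $-\infty$, forcing a unique crossing of the level $\alpha$ that yields the stated transcendental equations. Your anticipated ``main obstacle'' is not actually one: the intermediate value argument you already gave places the crossing inside $\left(0,{}_{q}\omega_{p,b,c,\delta,1}\right)$ automatically, so the interlacing of the zeros of ${}_{q}W_{p,b,c,\delta}$ and ${}_{q}W_{p,b,c,\delta}'$ is not needed here (the paper invokes it only in the convexity theorem), and this superfluous appeal does not affect the validity of your proof.
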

\begin{proof}
We shall show that the inequalities
\begin{equation}\label{MT1}
\real\left( \frac{z{}_{q}f_{p,b,c,\delta}'(z)}{_{q}f_{p,b,c,\delta}(z)}\right)>\alpha, \quad \real\left(\frac{z{}_{q}g_{p,b,c,\delta}'(z)}{_{q}g_{p,b,c,\delta}(z)}\right)>\alpha \text{ \ and \ } \real\left( \frac{z{}_{q}h_{p,b,c,\delta}'(z)}{_{q}h_{p,b,c,\delta}(z)}\right)>\alpha,
\end{equation}
are valid for $z\in \mathbb{D}_{{}_{q}x_{p,b,c,\delta,1}}(_{q}f_{p,b,c,\delta}),$  $z\in \mathbb{D}_{{}_{q}y_{p,b,c,\delta,1}}(_{q}g_{p,b,c,\delta})$ and  $z\in \mathbb{D}_{{}_{q}z_{p,b,c,\delta,1}}(_{q}h_{p,b,c,\delta}),$ respectively, and
each of the above-mentioned inequalities does not hold in any larger disk. Consider the functions
\begin{align*}
{}_{q}f_{p,b,c,\delta}(z)=& \bigg(2^{p+1}\Gamma(\frac{p}{\delta}+\frac{b+2}{2}){}_{q}W_{p,b,c,\delta}(z)\bigg)^{\frac{1}{p+1}}, \\
{}_{q}g_{p,b,c,\delta}(z)=&2^{p+1} \Gamma(\frac{p}{\delta}+\frac{b+2}{2})z^{-p}{}_{q}W_{p,b,c,\delta}(z),\\
{}_{q}h_{p,b,c,\delta}(z)=& 2^{p+1}\Gamma(\frac{p}{\delta}+\frac{b+2}{2})z^{1-\frac{p+1}{2}}{}_{q}W_{p,b,c,\delta}(\sqrt{z}).
\end{align*}
As a result of the logarithmic derivation we arrive at
\begin{align*}
\frac{z{}_{q}f_{p,b,c,\delta}'(z)}{{}_{q}f_{p,b,c,\delta}(z)}=& \frac{1}{p+1}\left(\frac{z{}_{q}W_{p,b,c,\delta}'(z)}{{}_{q}W_{p,b,c,\delta}(z)} \right)=1-\frac{1}{p+1}\sum_{n \geq 1}\frac{2z^2}{{}_{q}\omega_{p,b,c,\delta,n}^2-z^2},  \\
\frac{z{}_{q}g_{p,b,c,\delta}'(z)}{{}_{q}g_{p,b,c,\delta}(z)}=&-p+\left(\frac{z{}_{q}W_{p,b,c,\delta}'(z)}{{}_{q}W_{p,b,c,\delta}(z)} \right)= 1-\sum_{n \geq 1}\frac{2z^2}{{}_{q}\omega_{p,b,c,\delta,n}^2-z^2}, \\
\frac{z{}_{q}h_{p,b,c,\delta}'(z)}{{}_{q}h_{p,b,c,\delta}(z)}=&1-\frac{p+1}{2}+\frac{1}{2}\left(\frac{\sqrt{z}{}_{q}W_{p,b,c,\delta}'(\sqrt{z})}{{}_{q}W_{p,b,c,\delta}(\sqrt{z})} \right)=1-\sum_{n \geq 1}\frac{z}{{}_{q}\omega_{p,b,c,\delta,n}^2-z}.
\end{align*}
By making use of Eq. \eqref{lemineq2} given in \Cref{lem} we have
\begin{align*}
\real\left(\frac{z{}_{q}f_{p,b,c,\delta}'(z)}{_{q}f_{p,b,c,\delta}(z)}\right)&=1-\frac{1}{p+1}\real\left(\sum_{n \geq 1}\frac{2z^2}{{}_{q}\omega_{p,b,c,\delta,n}^2-z^2} \right)  
\\
&\geq 1-\frac{1}{p+1}\sum_{n \geq 1}\frac{2\left| z\right| ^2}{{}_{q}\omega_{p,b,c,\delta,n}^2-\left| z\right| ^2} = \frac{\left| z\right| {}_{q}f_{p,b,c,\delta}'(\left| z\right| )}{_{q}f_{p,b,c,\delta}(\left| z\right| )},\\
\real\left(\frac{z{}_{q}g_{p,b,c,\delta}'(z)}{{}_{q}g_{p,b,c,\delta}(z)}\right)&=1-\real\left(\sum_{n \geq 1}\frac{2z^2}{{}_{q}\omega_{p,b,c,\delta,n}^2-z^2} \right) 
\\
&\geq 1- \sum_{n \geq 1}\frac{2\left| z\right| ^2}{{}_{q}\omega_{p,b,c,\delta,n}^2-\left| z\right| ^2}= \frac{\left| z\right| {}_{q}g_{p,b,c,\delta}'(\left| z\right| )}{{}_{q}g_{p,b,c,\delta}(\left| z\right|)},
\\
\real\left( \frac{z{}_{q}h_{p,b,c,\delta}'(z)}{_{q}h_{p,b,c,\delta}(z)}\right)&=1-\real\left( \sum_{n \geq 1}\frac{z}{{}_{q}\omega_{p,b,c,\delta,n}^2-z}\right)\\
&\geq 1- \sum_{n \geq 1}\frac{\left| z\right| }{{}_{q}\omega_{p,b,c,\delta,n}^2-\left| z\right| }=\frac{\left| z\right| {}_{q}h_{p,b,c,\delta}'(\left| z\right| )}{{}_{q}h_{p,b,c,\delta}(\left| z\right| )}.
\end{align*}
It is important to mention that equalities in the above-mentioned inequalities are attained only when $z=\left| z\right|=r.$ In light of the later inequalities and the minimum principle for harmonic functions we deduce that the inequalities stated in \eqref{MT1} hold if and only if $\left|z\right|<{}_{q}x_{p,b,c,\delta,1}, $ $\left|z\right|<{}_{q}y_{p,b,c,\delta,1} $ and $\left|z\right|<{}_{q}z_{p,b,c,\delta,1} $, respectively, where ${}_{q}x_{p,b,c,\delta,1},$ ${}_{q}y_{p,b,c,\delta,1}$ and ${}_{q}z_{p,b,c,\delta,1}$ are the smallest positive roots of the equations
\begin{equation} \label{MT2}
 \frac{r{}_{q}f_{p,b,c,\delta}'(r)}{_{q}f_{p,b,c,\delta}(r)}=\alpha, \text{ \ \ } \frac{r{}_{q}g_{p,b,c,\delta}'(r)}{_{q}g_{p,b,c,\delta}(r)}=\alpha \text{ \ and \ } \frac{r{}_{q}h_{p,b,c,\delta}'(r)}{_{q}h_{p,b,c,\delta}(r)}=\alpha.
\end{equation}

As a natural result of these equalities we conclude that 
\[ r {}_{q}W_{p,b,c,\delta}'(r)-\alpha(p+1){}_{q}W_{p,b,c,\delta}(r)=0, \text{ \ \ } r {}_{q}W_{p,b,c,\delta}'(r)-(\alpha+p){}_{q}W_{p,b,c,\delta}(r)=0 \]
and
\[\sqrt{r}{}_{q}W_{p,b,c,\delta}'(\sqrt{r})-2\big(\alpha+\frac{p+1}{2}-1\big){}_{q}W_{p,b,c,\delta}(\sqrt{r})=0. \]
In order to finish the proof we need to show that the roots ${}_{q}x_{p,b,c,\delta,1}, $ ${}_{q}y_{p,b,c,\delta,1} $ and ${}_{q}z_{p,b,c,\delta,1} $ are, respectively, the smallest zeros of the above-mentioned transcendental equation. To do this, in view of the above inequalities, consider the functions for $r\in \left( 0,{}_{q}w_{p,b,c,\delta,1}\right) $
\begin{align*}
\inf_{z\in \mathbb{D}_{r}} \real\left(\frac{z{}_{q}f_{p,b,c,\delta}'(z)}{_{q}f_{p,b,c,\delta}(z)}\right)=&\frac{r{}_{q}f_{p,b,c,\delta}'(r)}{{}_{q}f_{p,b,c,\delta}(r)}={}_{q}u_{p,b,c,\delta}(r),\\
\inf_{z\in \mathbb{D}_{r}}\real\left( \frac{z{}_{q}g_{p,b,c,\delta}'(z)}{{}_{q}g_{p,b,c,\delta}(z)}\right) =&\frac{r{}_{q}g_{p,b,c,\delta}'(r)}{_{q}g_{p,b,c,\delta}(r)}={}_{q}v_{p,b,c,\delta}(r), \\
\inf_{z\in\mathbb{D}_{r}}\real\left( \frac{z{}_{q}h_{p,b,c,\delta}'(z)}{_{q}h_{p,b,c,\delta}(z)}\right) =&\frac{r{}_{q}h_{p,b,c,\delta}'(r)}{_{q}h_{p,b,c,\delta}(r)}={}_{q}w_{p,b,c,\delta}(r).
\end{align*}
Taking into account facts that
\[ {}_{q}u_{p,b,c,\delta}'(r)=-\frac{1}{p+1}\sum_{n \geq 1}\frac{4z{}_{q}\omega_{p,b,c,\delta,n}^{2}}{\left({}_{q}w_{p,b,c,\delta,n}^{2}-z^2 \right)^2 }<0, \text{ \ \ }{}_{q}v_{p,b,c,\delta}'(r)=-\sum_{n \geq 1}\frac{4z{}_{q}\omega_{p,b,c,\delta,n}^{2}}{\left({}_{q}\omega_{p,b,c,\delta,n}^{2}-z^2 \right)^2 }<0 \]
and
\[ {}_{q}w_{p,b,c,\delta}(r)=-\sum_{n \geq 1}\frac{{}_{q}\omega_{p,b,c,\delta,n}^{2}}{\left({}_{q}\omega_{p,b,c,\delta,n}^{2}-z \right)^2 }<0\]
we deduce that the real functions ${}_{q}u_{p,b,c,\delta}, {}_{q}v_{p,b,c,\delta}, {}_{q}w_{p,b,c,\delta}:\left(0,{}_{q}\omega_{p,b,c,\delta,1} \right)\rightarrow \mathbb{R} $ are strictly decreasing. In addition, by virtue of the limits determined as
\[ \lim_{r \searrow 0}{}_{q}u_{p,b,c,\delta}(r)=\lim_{r \searrow 0}{}_{q}v_{p,b,c,\delta}(r)=\lim_{r \searrow 0}{}_{q}w_{p,b,c,\delta}(r)=1\]
and
\[ \lim_{r\nearrow {}_{q}\omega_{p,b,c,\delta,1}}{}_{q}u_{p,b,c,\delta}(r)=\lim_{r\nearrow {}_{q}\omega_{p,b,c,\delta,1}}{}_{q}v_{p,b,c,\delta}(r)=\lim_{r\nearrow {}_{q}\omega_{p,b,c,\delta,1}}{}_{q}w_{p,b,c,\delta}(r)=-\infty. \]
we say that the roots ${}_{q}x_{p,b,c,\delta,1}, $ ${}_{q}y_{p,b,c,\delta,1} $ and ${}_{q}z_{p,b,c,\delta,1} $ are, respectively, the smallest zeros of the transcendental equation given in \eqref{MT2}. It is clear that these are desired results.
\end{proof}

The following theorems include some tight lower and upper bounds for the radii
of starlikeness of the functions considered in the above theorems.
\begin{theorem}\label{MainTheo1'}
Let $\delta,q,b,c>0$ and $p+1>0.$ The radius of starlikeness $r^{\star}({}_{q}f_{p,b,c,\delta})$ is satisfies
\[\text{\footnotesize$\sqrt{\frac{2(p+1)\Gamma(q+\frac{p}{\delta}+\frac{b+2}{2})}{c(p+3)\Gamma(\frac{p}{\delta}+\frac{b+2}{2})}}<r^{\star}({}_{q}f_{p,b,c,\delta})<2\sqrt{\frac{(p+1)(p+3)\Gamma(q+\frac{p}{\delta}+\frac{b+2}{2})\Gamma(2q+\frac{p}{\delta}+\frac{b+2}{2})}{c\left\lbrace(p+3)^2\Gamma(\frac{p}{\delta}+\frac{b+2}{2})\Gamma(2q\frac{p}{\delta}+\frac{b+2}{2})-2(p+5)(p+1)\Gamma^2(q+\frac{p}{\delta}+\frac{b+2}{2})\right\rbrace }}$}.\]
\end{theorem}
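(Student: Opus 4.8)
The plan is to produce the two-sided estimate by applying the Euler--Rayleigh inequalities to the smallest positive zero of the derivative of the renormalised Struve function. By \Cref{MainTheo1}(a) with $\alpha=0$, the radius $r^{\star}({}_qf_{p,b,c,\delta})$ is the smallest positive root of $r\,{}_qW'_{p,b,c,\delta}(r)=0$, i.e.\ the first positive zero of ${}_qW'_{p,b,c,\delta}$; I write $w_1\le w_2\le\cdots$ for these zeros (so $w_n={}_q\omega'_{p,b,c,\delta,n}$ and $r^{\star}({}_qf_{p,b,c,\delta})=w_1$). First I would set $W(z)=2^{p+1}\Gamma(\tfrac p\delta+\tfrac{b+2}{2})\,{}_qW_{p,b,c,\delta}(z)=\sum_{n\ge0}b_nz^{2n+p+1}$ and read off from the defining series the Taylor coefficients $b_n=\dfrac{(-1)^n c^n\,\Gamma(\tfrac p\delta+\tfrac{b+2}{2})}{4^n\,n!\,\Gamma(qn+\tfrac p\delta+\tfrac{b+2}{2})}$, so that in particular $b_0=1$ while $b_1$ and $b_2$ are explicit ratios of Gamma values.

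Next I would pass to the even entire function $\Phi(z)=W'(z)/z^{p}=\sum_{n\ge0}(2n+p+1)b_nz^{2n}$, whose positive zeros are exactly the $w_n$. The growth-order estimate in \Cref{mainlemma} gives order $\tfrac1{1+q}<1$ for $W$, hence the same order for $W'$ and for $\Phi$; combined with the reality of the zeros of ${}_qW'_{p,b,c,\delta}$ established there, Hadamard's factorisation theorem yields the genus-zero product $\Phi(z)=(p+1)\prod_{n\ge1}\bigl(1-z^2/w_n^2\bigr)$, with no exponential factor. Setting $\sigma_k=\sum_{n\ge1}w_n^{-2k}$, a comparison of this product with the Taylor series of $\Phi/(p+1)$ (Newton's identities) gives $\sigma_1=-d_1$ and $\sigma_2=d_1^2-2d_2$, where $d_k=(2k+p+1)b_k/(p+1)$. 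Since $w_1$ is the least zero, the power sums obey $\sigma_k^{-1/k}<w_1^2<\sigma_k/\sigma_{k+1}$ for every $k\ge1$, and the case $k=1$ already reads $\sigma_1^{-1}<\bigl(r^{\star}({}_qf_{p,b,c,\delta})\bigr)^2<\sigma_1/\sigma_2$.

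Finally I would substitute $b_1,b_2$ into $d_1,d_2$, simplify the resulting Gamma-function ratios in $\sigma_1$ and $\sigma_2$, and take square roots of $\sigma_1^{-1}$ and $\sigma_1/\sigma_2$ to arrive at the claimed lower and upper bounds. I expect the only genuine obstacle to be the justification of the genus-zero product for $\Phi$—namely that $W'$ has only real zeros and that no spurious exponential factor intervenes—which is exactly what \Cref{mainlemma} supplies through the reality and interlacing of the zeros together with the order bound $\tfrac1{1+q}<1$; everything after that is careful but routine bookkeeping with the Gamma quotients. It is worth checking along the way that $d_2>0$, equivalently $\sigma_2<\sigma_1^2$, which guarantees that the lower bound does not exceed the upper one and signals that the successive quotients $\sigma_k/\sigma_{k+1}$ for $k\ge2$ would tighten the estimate further.
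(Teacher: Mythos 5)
You take essentially the same route as the paper: with $\alpha=0$ in \Cref{MainTheo1}, the radius is the first positive zero of ${}_{q}W_{p,b,c,\delta}'$, which lies in $\mathcal{LP}$ (by \Cref{mainlemma} and closure of $\mathcal{LP}$ under differentiation) and hence admits a genus-zero product over its real zeros, after which the Euler--Rayleigh inequalities with $k=1$, applied to the power sums $\sigma_{1},\sigma_{2}$ read off from the Taylor coefficients (your Newton's identities are exactly the paper's logarithmic-derivative coefficient comparison), yield the bounds. One remark: your computation produces the lower bound $2\sqrt{(p+1)\Gamma(q+\tfrac{p}{\delta}+\tfrac{b+2}{2})\big/\bigl(c(p+3)\Gamma(\tfrac{p}{\delta}+\tfrac{b+2}{2})\bigr)}$ and the term $(p+1)(p+5)\Gamma^{2}(q+\tfrac{p}{\delta}+\tfrac{b+2}{2})$ in the denominator of the upper bound, which agrees with the final display of the paper's own proof rather than with the theorem statement as printed, the latter containing typographical slips.
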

\begin{proof}
When \(\alpha=0 \) in \autoref{MainTheo1}, we have $r^{\star}({}_{q}f_{p,b,c,\delta})$ is the smallest positive root of the equation \({}_{q}W_{p,b,c,\delta}'(z)=0.\) That is, the radius of starlikeness of the normalized generalized Struve function \({}_{q}f_{p,b,c,\delta}(z)\) corresponds to the radius of starlikeness of the function \({}_{q}\Xi_{p,b,c,\delta}(z)={}_{q}W_{p,b,c,\delta}'(z)\). The infinite series representations of the function \({}_{q}\Xi_{p,b,c,\delta}(z)\) and its derivative are given as
\[{}_{q}\Xi_{p,b,c,\delta}(z)=\sum_{n\geq 0}\frac{(-1)^{n}c^n(2n+p+1)}{2^{2n+p+1}n!\Gamma(qn+\frac{p}{\delta}+\frac{b+2}{2})}z^{2n+p}\]
and
\[{}_{q}\Xi_{p,b,c,\delta}'(z)=\sum_{n\geq 0}\frac{(-1)^{n}c^n(2n+p)(2n+p+1)}{2^{2n+p+1}n!\Gamma(qn+\frac{p}{\delta}+\frac{b+2}{2})}z^{2n+p-1}.\]
Because of the facts that the function  \({}_{q}W_{p,b,c,\delta}\) is of the Laguerre-P\'olya class of entire functions and that the class \(\mathcal{LP}\) is closed under differentiation, we deduce that the function \({}_{q}\Xi_{p,b,c,\delta}\) is also in the class \(\mathcal{LP}\). This means that the zeros of the functions \({}_{q}\Xi_{p,b,c,\delta}\) are all real. If we denote of the zeros of the function \({}_{q}\Xi_{p,b,c,\delta}\) by \({}_{q}\varepsilon_{p,b,c,\delta,n}\), then the infinite product representation of the function \({}_{q}\Xi_{p,b,c,\delta}\) is given as follows:
\[2^{p+1}\Gamma(\frac{p}{\delta}+\frac{b+2}{2}){}_{q}\Xi_{p,b,c,\delta}(z)=(p+1)z^{p}\prod_{n\geq1}\left(1-\frac{z^2}{{}_{q}\varepsilon_{p,b,c,\delta,n}^2} \right).\]
With the help of the logarithmic derivation of the last equality, we obtain
\[\frac{{}_{q}\Xi_{p,b,c,\delta}'(z)}{{}_{q}\Xi_{p,b,c,\delta}(z)}-\frac{p}{z}=-2\sum_{n\geq 1}\frac{z}{{}_{q}\varepsilon_{p,b,c,\delta,n}^2-z^2}=-2\sum_{n\geq 1}\sum_{k\geq0}\frac{z^{2k+1}}{{}_{q}\varepsilon_{p,b,c,\delta,n}^{2k+2}}=-2\sum_{k\geq0}\tau_{k+1}z^{2k+1}, \quad\left| z\right|<{}_{q}\varepsilon_{p,b,c,\delta,1},\]
where \(\tau_{k}=\sum_{n\geq 1}{}_{q}\varepsilon_{p,b,c,\delta,n}^{-2k}.\) On the other hand, by making use of the infinite sum representation of the function \({}_{q}\Xi_{p,b,c,\delta}\) we arrive at
\[\frac{{}_{q}\Xi_{p,b,c,\delta}'(z)}{{}_{q}\Xi_{p,b,c,\delta}(z)}=\sum_{n\geq 0}\frac{(-1)^{n}c^n(2n+p)(2n+p+1)}{2^{2n+p+1}n!\Gamma(qn+\frac{p}{\delta}+\frac{b+2}{2})}z^{2n+p-1} \bigg/  \sum_{n\geq 0}\frac{(-1)^{n}c^n(2n+p+1)}{2^{2n+p+1}n!\Gamma(qn+\frac{p}{\delta}+\frac{b+2}{2})}z^{2n+p}.\]
By comparing the coefficients of the last two equalities, we have
\[ \tau_{1}=\frac{c(p+3)\Gamma(\frac{p}{\delta}+\frac{b+2}{2})}{4(p+1)\Gamma(q+\frac{p}{\delta}+\frac{b+2}{2})} \text{ \ and \ } \tau_{2}=\frac{c^2(p+3)^2\Gamma^2(\frac{p}{\delta}+\frac{b+2}{2})}{16(p+1)^2\Gamma^2(q+\frac{p}{\delta}+\frac{b+2}{2})}-\frac{c^2(p+5)\Gamma(\frac{p}{\delta}+\frac{b+2}{2})}{16(p+1)\Gamma(2q+\frac{p}{\delta}+\frac{b+2}{2})}. \]
Now, by using the Euler-Rayleigh inequalities \(\tau_{k}^{-\frac{1}{k}}<{}_{q}\varepsilon_{p,b,c,\delta,1}^2<\frac{\tau_{k}}{\tau_{k+1}}\) for and \(k\in \mathbb{N}\), we get the following inequality:
\[\text{\footnotesize$2\sqrt{\frac{(p+1)\Gamma(q+\frac{p}{\delta}+\frac{b+2}{2})}{c(p+3)\Gamma(\frac{p}{\delta}+\frac{b+2}{2})}}<r^{\star}({}_{q}f_{p,b,c,\delta})<2\sqrt{\frac{(p+1)(p+3)\Gamma(q+\frac{p}{\delta}+\frac{b+2}{2})\Gamma(2q+\frac{p}{\delta}+\frac{b+2}{2})}{c\left\lbrace(p+3)^2\Gamma(\frac{p}{\delta}+\frac{b+2}{2})\Gamma(2q+\frac{p}{\delta}+\frac{b+2}{2})-(p+1)(p+5)\Gamma^2(q+\frac{p}{\delta}+\frac{b+2}{2})\right\rbrace }}$}.\]
No doubt we can find more tighter bounds for other values of \(k\in\mathbb{N}. \)
\end{proof}
\begin{theorem}\label{MainTheo2}
Let $\delta,q,b,c>0$ and $p+1>0.$ The radius of starlikeness $r^{\star}({}_{q}g_{p,b,c,\delta})$ is satisfies
\[2\sqrt{\frac{\Gamma(q+\frac{p}{\delta}+\frac{b+2}{2})}{3c\Gamma(\frac{p}{\delta}+\frac{b+2}{2})}}<r^{\star}({}_{q}g_{p,b,c,\delta})<2\sqrt{\frac{3\Gamma(q+\frac{p}{\delta}+\frac{b+2}{2})\Gamma(2q+\frac{p}{\delta}+\frac{b+2}{2})}{c\big(9\Gamma(\frac{p}{\delta}+\frac{b+2}{2})\Gamma(2q+\frac{p}{\delta}+\frac{b+2}{2})-5\Gamma^{2}(q+\frac{p}{\delta}+\frac{b+2}{2})\big)}}.\]
\end{theorem}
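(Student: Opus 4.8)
The plan is to specialize part \textbf{b} of \autoref{MainTheo1} to $\alpha=0$ and then run the Euler--Rayleigh scheme exactly as in the proof of \autoref{MainTheo1'}, the only change being that the relevant entire function is now the derivative of ${}_{q}g_{p,b,c,\delta}$ rather than ${}_{q}W_{p,b,c,\delta}'$. Setting $\alpha=0$ in part \textbf{b} shows that $r^{\star}({}_{q}g_{p,b,c,\delta})$ is the smallest positive root of $z\,{}_{q}W_{p,b,c,\delta}'(z)-p\,{}_{q}W_{p,b,c,\delta}(z)=0$, which, in view of the definition of ${}_{q}g_{p,b,c,\delta}$, is precisely the smallest positive zero of ${}_{q}g_{p,b,c,\delta}'$.

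First I would record the product form of ${}_{q}g_{p,b,c,\delta}$. From \eqref{mainlemma1} we have ${}_{q}g_{p,b,c,\delta}(z)=z^{-p}\cdot 2^{p+1}\Gamma(\tfrac{p}{\delta}+\tfrac{b+2}{2}){}_{q}W_{p,b,c,\delta}(z)=z\prod_{n\geq 1}\bigl(1-z^2/{}_{q}\omega_{p,b,c,\delta,n}^2\bigr)$, an odd entire function of growth order $\tfrac{1}{1+q}<1$ all of whose zeros are real; hence ${}_{q}g_{p,b,c,\delta}\in\mathcal{LP}$. Since $\mathcal{LP}$ is closed under differentiation, ${}_{q}g_{p,b,c,\delta}'\in\mathcal{LP}$, so all its zeros are real and, being even with value $1$ at the origin and of order below one, it admits the canonical product ${}_{q}g_{p,b,c,\delta}'(z)=\prod_{n\geq 1}\bigl(1-z^2/\gamma_n^2\bigr)$ with $0<\gamma_1\leq\gamma_2\leq\cdots$. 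Consequently $r^{\star}({}_{q}g_{p,b,c,\delta})=\gamma_1=\sqrt{\gamma_1^2}$, and it remains only to bound $\gamma_1^2$.

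Next I would extract the Rayleigh sums $\sigma_k=\sum_{n\geq 1}\gamma_n^{-2k}$. Logarithmic differentiation of the product gives
\[\frac{{}_{q}g_{p,b,c,\delta}''(z)}{{}_{q}g_{p,b,c,\delta}'(z)}=-2\sum_{k\geq 0}\sigma_{k+1}z^{2k+1},\qquad |z|<\gamma_1,\]
while the same quotient computed from the explicit Maclaurin series ${}_{q}g_{p,b,c,\delta}'(z)=\sum_{n\geq 0}(2n+1)b_n z^{2n}$, with $b_n=2^{p+1}\Gamma(\tfrac{p}{\delta}+\tfrac{b+2}{2})\cdot\frac{(-1)^nc^n}{2^{2n+p+1}n!\,\Gamma(qn+\frac{p}{\delta}+\frac{b+2}{2})}$, yields its first two coefficients after a short Cauchy-product computation; comparing coefficients gives $\sigma_1$ and $\sigma_2$ in closed form in terms of the Gamma factors. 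Finally, the Euler--Rayleigh inequalities $\sigma_k^{-1/k}<\gamma_1^2<\sigma_k/\sigma_{k+1}$ with $k=1$, that is $\sigma_1^{-1}<\gamma_1^2<\sigma_1/\sigma_2$, produce the claimed lower and upper bounds after substituting the computed values and taking square roots.

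The conceptual step---establishing reality of the zeros so that the factorization and the Euler--Rayleigh inequalities are legitimate---is disposed of cleanly by the $\mathcal{LP}$ argument above, so I expect the only genuine work to be the bookkeeping of the Gamma-function ratios when simplifying $\sigma_1$ and $\sigma_1/\sigma_2$ into the exact forms displayed in the statement. One should also check that the denominator $9\Gamma(\tfrac{p}{\delta}+\tfrac{b+2}{2})\Gamma(2q+\tfrac{p}{\delta}+\tfrac{b+2}{2})-5\Gamma^{2}(q+\tfrac{p}{\delta}+\tfrac{b+2}{2})$ of the upper bound is positive, which follows from the log-convexity of $\Gamma$ (giving $\Gamma^{2}(q+\tfrac{p}{\delta}+\tfrac{b+2}{2})\leq\Gamma(\tfrac{p}{\delta}+\tfrac{b+2}{2})\Gamma(2q+\tfrac{p}{\delta}+\tfrac{b+2}{2})$), so that the stated upper bound is indeed a real number.
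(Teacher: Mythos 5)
Your proposal is correct and follows essentially the same route as the paper: identify $r^{\star}({}_{q}g_{p,b,c,\delta})$ with the first positive zero of ${}_{q}g_{p,b,c,\delta}'$, use the Laguerre--P\'olya property (closure of $\mathcal{LP}$ under differentiation) to obtain a canonical product over real zeros for the derivative, and apply the Euler--Rayleigh inequalities with $k=1$ after computing the first two Rayleigh sums by coefficient comparison. The only differences are cosmetic: the paper passes to ${}_{q}\varphi_{p,b,c,\delta}(z)={}_{q}g_{p,b,c,\delta}'(2\sqrt{z})$ and invokes Runckel's lemma together with Laguerre's lemma from \cite{BSan} to place the zeros on the positive real axis, whereas you exploit the evenness of ${}_{q}g_{p,b,c,\delta}'$ and its value $1$ at the origin to get the product $\prod_{n\geq 1}\bigl(1-z^2/\gamma_n^2\bigr)$ directly (your rescaled sums $\sigma_k$ agree with the paper's $\ell_k$ via $\ell_k=4^k\sigma_k$, so the bounds coincide), and you add a positivity check on the denominator of the upper bound via log-convexity of $\Gamma$, which the paper leaves implicit.
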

\begin{proof}
Our aim is to give more tight bounds for the radius of starlikeness $r^{\star}({}_{q}g_{p,b,c,\delta})$ by making use of the Euler-Rayleigh inequalities. Let us recall that the radius of starlikeness of the function \( {}_{q}g_{p,b,c,\delta}(z)\) is the first positive zero of its derivative, according to \cite{BKS,BOS}. We can draw conclusion from \Cref{Runckel} that the zeros of
\[{}_{q}g_{p,b,c,\delta}(z)=\Gamma(\tfrac{p}{\delta}+\tfrac{b+2}{2})\sum_{n\geq0}\frac{(-1)^n c^n}{n!\Gamma(qn+\tfrac{p}{\delta}+\tfrac{b+2}{2})}\left(\frac{z}{2} \right)^{2n+1} \]
all are real when $\delta,q,b,c>0$ and $p+1>0$. Consequently, this function belongs to the Laguerre-P\'olya class $\mathcal{LP}$ of real entire functions (see \cite{DC} for more details), which are uniform limits of real polynomials whose all zeros are real. Now, since the Laguerre-P\'olya class  $\mathcal{LP}$ is closed under differentiation, it follows that ${}_{q}g_{p,b,c,\delta}'$ belongs also to the Laguerre-P\'olya class and hence all of its zeros are real. Now, we consider the entire function
\[{}_{q}\varphi_{p,b,c,\delta}(z)={}_{q}g_{p,b,c,\delta}'(2\sqrt{z})=\Gamma(\tfrac{p}{\delta}+\tfrac{b+2}{2})\sum_{n\geq0}\frac{(-1)^n(c)^n(2n+1)}{n!\Gamma(qn+\tfrac{p}{\delta}+\tfrac{b+2}{2})}z^n .\]
Therefore in light of Laguerre's Lemma in \cite[Lem. 1, p. 2208]{BSan} we obtain that the entire function
\[{}_{q}\gamma_{p,b,c,\delta}(z)=\Gamma(\tfrac{p}{\delta}+\tfrac{b+2}{2})\sum_{n\geq0}\frac{c^n(2n+1)z^n  }{n!\Gamma(qn+\tfrac{p}{\delta}+\tfrac{b+2}{2})}\]
has real zeros. Further, it is evident that the function has negative zeros since the coefficients of ${}_{q}\gamma_{p,b,c,\delta}$ are all positive. Hence ${}_{q}\gamma_{p,b,c,\delta}(-z)$ have real and positive zeros. That is to say, the function ${}_{q}\varphi_{p,b,c,\delta}(z)$ has real and only positive zeros. Suppose that \({}_{q}\alpha_{p,b,c,\delta,n}\)'s are  the zeros of the function ${}_{q}\varphi_{p,b,c,\delta}(z).$ Thus, since the function $z\mapsto{}_{q}\varphi_{p,b,c,\delta}(z)={}_{q}g_{p,b,c,\delta}'(2\sqrt{z}) $ has growth order $\tfrac{1}{1+q}$ it can be represented by the product
\[ {}_{q}\varphi_{p,b,c,\delta}(z)=\prod_{n\geq 1}\left(1-\frac{z}{{}_{q}\alpha_{p,b,c,\delta,n}} \right)  \]
where ${}_{q}\alpha_{p,b,c,\delta,n}>0$ for each $n \in \mathbb{N}.$ Now, by using the Euler-Rayleigh sum $\ell_{k}=\sum_{n \geq 1}{}_{q}\alpha_{p,b,c,\delta,n}^{-k} $ we obtain that
\begin{equation}\label{Euler-Rayleigh1}
\text{\footnotesize$\frac{{}_{q}\varphi_{p,b,c,\delta}'(z)}{{}_{q}\varphi_{p,b,c,\delta}(z)}=-\sum_{n \geq 1}\frac{1}{{}_{q}\alpha_{p,b,c,\delta,n}-z}=-\sum_{n \geq 1}\sum_{k\geq 0} \frac{z^k}{{}_{q}\alpha_{p,b,c,\delta,n}^{k+1}}=-\sum_{k\geq 0} \ell_{k+1}z^k, \text{ \ \ } \left| z\right| <{}_{q}\alpha_{p,b,c,\delta,1},$}
\end{equation}

\begin{equation}\label{Euler-Rayleigh2}
\frac{{}_{q}\varphi_{p,b,c,\delta}'(z)}{{}_{q}\varphi_{p,b,c,\delta}(z)}=\sum_{n\geq 0}\frac{(-1)^{n+1}c^{n+1}(2n+3)}{n!\Gamma(q(n+1)+\tfrac{p}{\delta}+\tfrac{b+2}{2})}z^n\bigg/ \sum_{n\geq 0}\frac{(-1)^n c^n (2n+1) }{n!\Gamma(qn+\tfrac{p}{\delta}+\tfrac{b+2}{2})}z^n .
\end{equation}
Equations \eqref{Euler-Rayleigh1} and \eqref{Euler-Rayleigh2} enable us to express the Euler-Rayleigh sums in terms of $p,b,c,\delta$ and by using the
Euler-Rayleigh inequalities $\ell_{k}^{-\frac{1}{k}}<{}_{q}\alpha_{p,b,c,\delta,1}<\frac{\ell_{k}}{\ell_{k+1}}$ we get the inequalities for $\delta,q,b,c>0$, $p+1>0$ and $k\in\mathbb{N}$
\[2\sqrt{\ell_{k}^{-\frac{1}{k}}}<r^{\star}({}_{q}g_{p,b,c,\delta})<2\sqrt{\frac{\ell_{k}}{\ell_{k+1}}}.\]
Since
\[\ell_{1}=\frac{3c\Gamma(\frac{p}{\delta}+\frac{b+2}{2})}{\Gamma(q+\frac{p}{\delta}+\frac{b+2}{2})} \text{ \ and \ } \ell_{2}=\frac{9c^2\Gamma^{2}(\frac{p}{\delta}+\frac{b+2}{2})}{\Gamma^{2}(q+\frac{p}{\delta}+\frac{b+2}{2})}-\frac{5c^2\Gamma(\frac{p}{\delta}+\frac{b+2}{2})}{\Gamma(2q+\frac{p}{\delta}+\frac{b+2}{2})} \]
in particular, for $k=1$ from the above Euler-Rayleigh inequalities we have the next inequality for $2\sqrt{{}_{q}\alpha_{p,b,c,\delta,1}},$ that is,
\[2\sqrt{\frac{\Gamma(q+\frac{p}{\delta}+\frac{b+2}{2})}{3c\Gamma(\frac{p}{\delta}+\frac{b+2}{2})}}<r^{\star}({}_{q}g_{p,b,c,\delta})<2\sqrt{\frac{3\Gamma(q+\frac{p}{\delta}+\frac{b+2}{2})\Gamma(2q+\frac{p}{\delta}+\frac{b+2}{2})}{c\big(9\Gamma(\frac{p}{\delta}+\frac{b+2}{2})\Gamma(2q+\frac{p}{\delta}+\frac{b+2}{2})-5\Gamma^{2}(q+\frac{p}{\delta}+\frac{b+2}{2})\big)}}.\]
Of course, we can obtain more tighter bounds for other values of $k\in \mathbb{N}.$
\end{proof}
\begin{theorem}\label{MainTheo3}
The radius of starlikeness $r^{\star}({}_{q}h_{p,b,c,\delta})$ is satisfies
\[\frac{2\Gamma(q+\frac{p}{\delta}+\frac{b+2}{2})}{c\Gamma(\frac{p}{\delta}+\frac{b+2}{2})}<r^{\star}({}_{q}h_{p,b,c,\delta})<\frac{8\Gamma(q+\frac{p}{\delta}+\frac{b+2}{2})\Gamma(2q+\frac{p}{\delta}+\frac{b+2}{2})}{c\left\lbrace4\Gamma(\frac{p}{\delta}+\frac{b+2}{2})\Gamma(2q+\frac{p}{\delta}+\frac{b+2}{2})-3\Gamma^{2}(q+\frac{p}{\delta}+\frac{b+2}{2}) \right\rbrace } .\]

\end{theorem}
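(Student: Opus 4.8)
The plan is to follow the same Euler--Rayleigh strategy used in \Cref{MainTheo1'} and \Cref{MainTheo2}, but applied directly to the derivative of ${}_{q}h_{p,b,c,\delta}$. First I would note that, by part (c) of \Cref{MainTheo1} with $\alpha=0$, the radius $r^{\star}({}_{q}h_{p,b,c,\delta})$ is the smallest positive root of $\sqrt{r}\,{}_{q}W_{p,b,c,\delta}'(\sqrt{r})-(p-1){}_{q}W_{p,b,c,\delta}(\sqrt{r})=0$; a short logarithmic-derivative computation shows this is precisely the condition ${}_{q}h_{p,b,c,\delta}'(z)=0$, so that $r^{\star}({}_{q}h_{p,b,c,\delta})$ is the first positive zero of ${}_{q}h_{p,b,c,\delta}'$. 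Expanding the defining series, substituting $\sqrt{z}$ and collecting powers yields the clean expansions
\[{}_{q}h_{p,b,c,\delta}(z)=\Gamma(\tfrac{p}{\delta}+\tfrac{b+2}{2})\sum_{n\geq 0}\frac{(-1)^{n}c^{n}}{4^{n}\,n!\,\Gamma(qn+\tfrac{p}{\delta}+\tfrac{b+2}{2})}\,z^{n+1},\]
\[{}_{q}h_{p,b,c,\delta}'(z)=\Gamma(\tfrac{p}{\delta}+\tfrac{b+2}{2})\sum_{n\geq 0}\frac{(-1)^{n}c^{n}(n+1)}{4^{n}\,n!\,\Gamma(qn+\tfrac{p}{\delta}+\tfrac{b+2}{2})}\,z^{n},\]
the latter having constant term $1$, consistent with ${}_{q}h_{p,b,c,\delta}\in\mathcal{A}$.

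Next I would establish the reality and positivity of the zeros of ${}_{q}h_{p,b,c,\delta}'$. Writing $\Phi(w)=\Gamma(\tfrac{p}{\delta}+\tfrac{b+2}{2})\sum_{n\geq 0}\frac{(-c/4)^{n}}{n!\,\Gamma(qn+\tfrac{p}{\delta}+\tfrac{b+2}{2})}w^{n}$, one has the two relations ${}_{q}g_{p,b,c,\delta}(z)=\tfrac{z}{2}\Phi(z^{2})$ and ${}_{q}h_{p,b,c,\delta}(z)=z\,\Phi(z)$. Since all zeros of ${}_{q}g_{p,b,c,\delta}$ are real (as shown in the proof of \Cref{MainTheo2} via \Cref{Runckel}), $\Phi$ can have only positive real zeros, and hence ${}_{q}h_{p,b,c,\delta}$ has only real zeros and belongs to the Laguerre--P\'olya class $\mathcal{LP}$. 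As $\mathcal{LP}$ is closed under differentiation, ${}_{q}h_{p,b,c,\delta}'\in\mathcal{LP}$, so its zeros, say ${}_{q}\eta_{p,b,c,\delta,n}$, are all real; the alternation of signs of the coefficients above forces them to be positive. Because the growth order of ${}_{q}h_{p,b,c,\delta}'$ equals $\tfrac{1}{1+q}<1$, the Hadamard factorization carries no exponential factor, giving
\[{}_{q}h_{p,b,c,\delta}'(z)=\prod_{n\geq 1}\left(1-\frac{z}{{}_{q}\eta_{p,b,c,\delta,n}}\right).\]

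Finally, I would run the Euler--Rayleigh machinery on $\ell_{k}=\sum_{n\geq 1}{}_{q}\eta_{p,b,c,\delta,n}^{-k}$. Expanding $\tfrac{{}_{q}h_{p,b,c,\delta}''}{{}_{q}h_{p,b,c,\delta}'}=-\sum_{k\geq 0}\ell_{k+1}z^{k}$ from the product and, independently, as the quotient of the two explicit series, then equating coefficients gives
\[\ell_{1}=\frac{c\,\Gamma(\tfrac{p}{\delta}+\tfrac{b+2}{2})}{2\,\Gamma(q+\tfrac{p}{\delta}+\tfrac{b+2}{2})},\qquad \ell_{2}=\frac{c^{2}\Gamma^{2}(\tfrac{p}{\delta}+\tfrac{b+2}{2})}{4\,\Gamma^{2}(q+\tfrac{p}{\delta}+\tfrac{b+2}{2})}-\frac{3c^{2}\Gamma(\tfrac{p}{\delta}+\tfrac{b+2}{2})}{16\,\Gamma(2q+\tfrac{p}{\delta}+\tfrac{b+2}{2})}.\]
Applying $\ell_{k}^{-1/k}<{}_{q}\eta_{p,b,c,\delta,1}<\ell_{k}/\ell_{k+1}$ with $k=1$ and recalling $r^{\star}({}_{q}h_{p,b,c,\delta})={}_{q}\eta_{p,b,c,\delta,1}$, the lower bound $\ell_{1}^{-1}$ and the upper bound $\ell_{1}/\ell_{2}$ reduce, after simplification, to exactly the two quantities in the statement; note that $\ell_{2}>0$ is automatic, which guarantees positivity of the denominator $4\Gamma(\tfrac{p}{\delta}+\tfrac{b+2}{2})\Gamma(2q+\tfrac{p}{\delta}+\tfrac{b+2}{2})-3\Gamma^{2}(q+\tfrac{p}{\delta}+\tfrac{b+2}{2})$. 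I expect the main obstacle to be the conceptual step of \Cref{MainTheo2}-style reality and positivity of the zeros of ${}_{q}h_{p,b,c,\delta}'$ (the $\mathcal{LP}$ membership via the $\Phi$-factorization), while the coefficient extraction for $\ell_{1},\ell_{2}$ and the final algebraic simplification are routine; sharper bounds follow by taking $k\geq 2$.
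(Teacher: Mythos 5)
Your proposal is correct and is essentially the paper's own argument: identify $r^{\star}({}_{q}h_{p,b,c,\delta})$ with the first positive zero of ${}_{q}h_{p,b,c,\delta}'$, show that this derivative belongs to $\mathcal{LP}$ and has only positive zeros, factor it as a genus-zero Hadamard product, and apply the Euler--Rayleigh inequalities with $k=1$; your sums agree with the paper's after the rescaling $\kappa_{k}=4^{k}\ell_{k}$, which arises only because the paper works with ${}_{q}\Upsilon_{p,b,c,\delta}(z)={}_{q}h_{p,b,c,\delta}'(4z)$ rather than with ${}_{q}h_{p,b,c,\delta}'$ itself, and both computations produce exactly the stated bounds. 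The only deviations are harmless: you derive reality of the zeros from the factorization ${}_{q}h_{p,b,c,\delta}(z)=z\Phi(z)$ combined with the known reality of the zeros of ${}_{q}g_{p,b,c,\delta}(z)=z\Phi(z^{2})$ (note it is $z\Phi(z^{2})$, not $\tfrac{z}{2}\Phi(z^{2})$, since ${}_{q}g_{p,b,c,\delta}\in\mathcal{A}$ forces leading coefficient $1$ --- a slip with no effect on the zero set), whereas the paper re-runs the Runckel-type argument of \Cref{mainlemma} and invokes Laguerre's lemma for positivity where you use the sign alternation of the coefficients.
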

\begin{proof}
Consider the infinite sum representation of ${}_{q}h_{p,b,c,\delta}(z)$ and its derivative
\[ {}_{q}h_{p,b,c,\delta}(z)= \Gamma(\frac{p}{\delta}+\frac{b+2}{2})\sum_{n\geq 0}\frac{(-1)^n c^n}{4^n n!\Gamma(qn+\frac{p}{\delta}+\frac{b+2}{2})}z^{n+1},\]
\[ {}_{q}\Upsilon_{p,b,c,\delta}(z)={}_{q}h_{p,b,c,\delta}'(4z)=\Gamma(\frac{p}{\delta}+\frac{b+2}{2})\sum_{n\geq 0}\frac{(-1)^n c^n(n+1)}{ n!\Gamma(qn+\frac{p}{\delta}+\frac{b+2}{2})}z^{n} .\]
Moreover, it is possible to prove the reality of the zeros of the function $ {}_{q}h_{p,b,c,\delta}$ by using a similar approaching to the proof of \Cref{mainlemma}. This means that  $ {}_{q}h_{p,b,c,\delta}$ belongs to the Laguerre-P\'olya class $\mathcal{LP}.$ Therefore the function ${}_{q}h_{p,b,c,\delta}'$ belongs to the Laguerre-P\'olya class $\mathcal{LP}$ and has only real zeros. No doubt this is also valid for the function ${}_{q}\Upsilon_{p,b,c,\delta}.$ That is to say, the function ${}_{q}\Upsilon_{p,b,c,\delta}$ is the element of the Laguerre-P\'olya class $\mathcal{LP}.$ Therefore, we can reach the conclusion from Laguerre's Lemma stated in \cite[Lem. 1, p. 2208]{BSan} that the function ${}_{q}\Upsilon_{p,b,c,\delta}$ has only positive zeros and has growth order $\tfrac{1}{1+q}$, and thus ${}_{q}\Upsilon_{p,b,c,\delta}(z)$ can be represented by the product
\begin{equation}
{}_{q}\Upsilon_{p,b,c,\delta}(z)=\prod_{n\geq 1}\left(1-\frac{z}{{}_{q}\varsigma_{p,b,c,\delta,n}} \right),
\end{equation}
where \({}_{q}\varsigma_{p,b,c,\delta,n}\)'s are the zeros of the function \({}_{q}\Upsilon_{p,b,c,\delta}(z)\) and ${}_{q}\varsigma_{p,b,c,\delta,n}>0$ for each $n\in \mathbb{N}.$ Now, by using the Euler-Rayleigh sum $\kappa_{k}=\sum_{n \geq 1}{}_{q}\varsigma_{p,b,c,\delta,n}^{-k}$ the infinite sum representation of the generalized Struve function ${}_{q}W_{p,b,c,\delta}$ we get
\[\frac{{}_{q}\Upsilon_{p,b,c,\delta}'(z)}{{}_{q}\Upsilon_{p,b,c,\delta}(z)}=-\sum_{n \geq 1}\frac{1}{{}_{q}\varsigma_{p,b,c,\delta,n}-z} =-\sum_{n \geq 1}\sum_{k\geq 0}\frac{z^k}{{}_{q}\varsigma_{p,b,c,\delta,n}^{k+1}}=-\sum_{k\geq 0}\kappa_{k+1}z^k, \text{ \ \ }\left| z\right|<{}_{q}\varsigma_{p,b,c,\delta,1}, \]
\[ \frac{{}_{q}\Upsilon_{p,b,c,\delta}'(z)}{{}_{q}\Upsilon_{p,b,c,\delta}(z)}= \sum_{n\geq 0}\frac{(-1)^{n+1}c^{n+1}(n+2)}{n!\Gamma(q(n+1)+\frac{p}{\delta}+\frac{b+2}{2})}z^n \bigg/ \sum_{n\geq 0}\frac{(-1)^n c^{n}(n+1)}{n!\Gamma(qn+\frac{p}{\delta}+\frac{b+2}{2})}z^n.\]
With the help of the above-mentioned last two equality we can express the Euler-Rayleigh sums in terms of $p,b,c,\delta$ and by using the
Euler-Rayleigh inequalities $\kappa_{k}^{-\frac{1}{k}}<{}_{q}\varsigma_{p,b,c,\delta,1}<\frac{\kappa_{k}}{\kappa_{k+1}}$ we get the inequalities for $4{}_{q}\varsigma_{p,b,c,\delta,1}$ for $\delta,q,b,c>0$, $p+1>0$ and $k\in \mathbb{N}$
\[4\kappa_{k}^{-\frac{1}{k}}<r^{\star}({}_{q}h_{p,b,c,\delta})<4\frac{\kappa_{k}}{\kappa_{k+1}}.\]
Since
\[\kappa_{1}=\frac{2c\Gamma(\frac{p}{\delta}+\frac{b+2}{2})}{\Gamma(q+\frac{p}{\delta}+\frac{b+2}{2})} \text{ \ and \ } \kappa_{2}=\frac{4c^{2}\Gamma^{2}(\frac{p}{\delta}+\frac{b+2}{2})}{\Gamma^{2}(q+\frac{p}{\delta}+\frac{b+2}{2})}-\frac{3c^{2}\Gamma(\frac{p}{\delta}+\frac{b+2}{2})}{\Gamma(2q+\frac{p}{\delta}+\frac{b+2}{2})} \]
in particular, for $k=1$ from the above Euler-Rayleigh inequalities we have the next inequality for  $4{}_{q}\varsigma_{p,b,c,\delta,1}$, that is,
\[\frac{2\Gamma(q+\frac{p}{\delta}+\frac{b+2}{2})}{c\Gamma(\frac{p}{\delta}+\frac{b+2}{2})}<r^{\star}({}_{q}h_{p,b,c,\delta})<\frac{8\Gamma(q+\frac{p}{\delta}+\frac{b+2}{2})\Gamma(2q+\frac{p}{\delta}+\frac{b+2}{2})}{c\left\lbrace4\Gamma(\frac{p}{\delta}+\frac{b+2}{2})\Gamma(2q+\frac{p}{\delta}+\frac{b+2}{2})-3\Gamma^{2}(q+\frac{p}{\delta}+\frac{b+2}{2}) \right\rbrace }, \]
of course we can obtain more tighter bounds for other values of $k\in \mathbb{N}.$
\end{proof}
\subsection{The radii of convexity of order $\alpha$ of the functions ${}_{q}f_{p,b,c,\delta}$, ${}_{q}g_{p,b,c,\delta}$ and ${}_{q}h_{p,b,c,\delta}$.} In this section we aim to determine the radii of convexity of the normalized generalized Struve functions and to find tight lower and upper bounds for the radius of convexity of these normalized Struve functions with the help of Euler-Rayleigh inequalities. 

\begin{theorem}\label{MainTheo4}
Let $\delta,q,b,c>0$,  $p+1>0 \text{ \ and \ } \alpha \in\left[0,1 \right).$ Then the following assertions hold true.
\item[\bf a.] The radius of convexity of order $\alpha$ of the function ${}_{q}f_{p,b,c,\delta}$ is the smallest root of the equation
\[1+\left( \frac{1}{p+1}-1\right)r\frac{ {}_{q}W_{p,b,c,\delta}'(r)}{{}_{q}W_{p,b,c,\delta}(r)}+r\frac{ {}_{q}W_{p,b,c,\delta}''(r)}{{}_{q}W_{p,b,c,\delta}'(r)}=\alpha. \]
\item[\bf b.] The radius of convexity of order $\alpha$ of the function ${}_{q}g_{p,b,c,\delta}$ is the smallest root of the equation
\[1+r\frac{{}_{q}g_{p,b,c,\delta}''(r)}{{}_{q}g_{p,b,c,\delta}'(r)}=\alpha.\]

\item[\bf c.] The radius of convexity of order $\alpha$ of the function ${}_{q}h_{p,b,c,\delta}$ is the smallest root of the equation
\[1+z\frac{{}_{q}h_{p,b,c,\delta}''(z)}{{}_{q}h_{p,b,c,\delta}'(z)}=\alpha.\]
\end{theorem}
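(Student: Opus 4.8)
The plan is to follow the scheme of \autoref{MainTheo1}, replacing the starlikeness quotient $z f'/f$ by the convexity quantity $1+z f''/f'$. Write $W={}_{q}W_{p,b,c,\delta}$ and, for brevity, $f={}_{q}f_{p,b,c,\delta}$, $g={}_{q}g_{p,b,c,\delta}$, $h={}_{q}h_{p,b,c,\delta}$. My first step is to check that the three displayed transcendental equations are nothing but the conditions $\real(1+z f''/f')=\alpha$, $\real(1+z g''/g')=\alpha$ and $\real(1+z h''/h')=\alpha$ read off along the positive real axis. For $f=(CW)^{1/(p+1)}$ with $C=2^{p+1}\Gamma(\tfrac{p}{\delta}+\tfrac{b+2}{2})$, two logarithmic differentiations give
\[
1+\frac{z f''(z)}{f'(z)}=1+\Bigl(\tfrac{1}{p+1}-1\Bigr)\frac{zW'(z)}{W(z)}+\frac{zW''(z)}{W'(z)},
\]
which is exactly the left member of assertion (a); for $g$ and $h$ the equations are already stated intrinsically, so nothing further is needed there.

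Next I would install the relevant Mittag--Leffler expansions. By \autoref{mainlemma}, $CW(z)=z^{p+1}\prod_{n\ge1}(1-z^2/\omega_n^2)$ with $\omega_n={}_{q}\omega_{p,b,c,\delta,n}$; since $\mathcal{LP}$ is closed under differentiation, $W'$ is again of class $\mathcal{LP}$ and factors as $(p+1)C^{-1}z^{p}\prod_{n\ge1}(1-z^2/\varepsilon_n^2)$ with positive zeros $\varepsilon_n$ (the ${}_{q}\varepsilon_{p,b,c,\delta,n}$ of \autoref{MainTheo1'}). Substituting $zW'/W=(p+1)-\sum 2z^2/(\omega_n^2-z^2)$ and $zW''/W'=p-\sum 2z^2/(\varepsilon_n^2-z^2)$ into the identity above, the constant terms collapse to $1$ and one obtains
\[
1+\frac{z f''(z)}{f'(z)}=1-\sum_{n\ge1}\left(\frac{2z^2}{\varepsilon_n^2-z^2}-\frac{p}{p+1}\,\frac{2z^2}{\omega_n^2-z^2}\right).
\]
For the companions, the same differentiation-closedness applied to $g$ and $h$ (both in $\mathcal{LP}$, with $g(z)=z\prod(1-z^2/\omega_n^2)$ and $h(z)=z\prod(1-z/\omega_n^2)$) yields purely subtractive expansions $1+z g''/g'=1-\sum 2z^2/(\gamma_n^2-z^2)$ and $1+z h''/h'=1-\sum z/(\eta_n-z)$, where $\gamma_n$ and $\eta_n$ are the positive zeros of $g'$ and $h'$.

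The core of the argument is the lower estimate of the real parts by their values on the positive axis. For $g$ and $h$ the sums carry only negative terms, so \eqref{lemineq2} of \autoref{lem} applies termwise and gives $\real(1+z g''/g')\ge 1+r g''(r)/g'(r)$ for $|z|\le r<\gamma_1$, and likewise for $h$ on $|z|\le r<\eta_1$, with equality only at $z=r$. The function $f$ is the genuine obstacle, since its expansion mixes a negative family with a positive one and admits no termwise bound. Here I would invoke the interlacing $\varepsilon_n<\omega_n$ from \autoref{mainlemma} (so that $\omega_n^2>\varepsilon_n^2$) and apply \autoref{lem} with $b=\varepsilon_n^2$, $a=\omega_n^2$, variable $z^2$, and $\lambda=\tfrac{p}{p+1}\in[0,1)$ to get, for $|z|\le r<\varepsilon_1$,
\[
\real\!\left(\frac{z^2}{\varepsilon_n^2-z^2}-\frac{p}{p+1}\,\frac{z^2}{\omega_n^2-z^2}\right)\le\frac{r^2}{\varepsilon_n^2-r^2}-\frac{p}{p+1}\,\frac{r^2}{\omega_n^2-r^2}.
\]
Summation yields $\real(1+z f''/f')\ge 1+r f''(r)/f'(r)$, again with equality only at $z=r$. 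The delicate point is verifying the hypotheses of \autoref{lem}, in particular $\lambda=\tfrac{p}{p+1}\in[0,1]$, which is what compels attention to the sign of $p$ (for $0\le p$ it is automatic); this is the step I expect to cost the most care.

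Finally I would close the argument precisely as in \autoref{MainTheo1}. A decisive feature of \autoref{lem} is that it bounds each term by its value at the radius $r$, not at $|z|$, so the estimates above give directly $\inf_{|z|\le r}\real(1+z f''/f')=1+r f''(r)/f'(r)=:\Phi_f(r)$, attained at $z=r$; no monotonicity of $\Phi_f$ is required (for $g$ and $h$ it nevertheless holds, making their analysis transparent). Since $\Phi_f(0^+)=1>\alpha$ while $\Phi_f(r)\to-\infty$ as $r$ approaches the first singularity $\varepsilon_1$ (the $-2r^2/(\varepsilon_1^2-r^2)$ term dominates the bounded $\omega_1$-contribution), continuity produces a smallest root $r_0\in(0,\varepsilon_1)$ of $\Phi_f(r)=\alpha$, and $\Phi_f>\alpha$ throughout $(0,r_0)$. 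Hence, via the minimum principle for harmonic functions, $\real(1+z f''/f')>\alpha$ on all of $\mathbb{D}_{r_0}$ and equals $\alpha$ at the boundary point $z=r_0$, identifying $r_0$ as the radius of convexity of order $\alpha$. The identical endpoint analysis with $\gamma_1$ and $\eta_1$ in place of $\varepsilon_1$ disposes of $g$ and $h$.
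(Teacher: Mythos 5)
Your proposal follows the paper's own route almost exactly: the same logarithmic-derivative identity for $1+zf''/f'$, the same infinite-product expansions obtained from \Cref{mainlemma} together with closure of $\mathcal{LP}$ under differentiation, the same use of \Cref{lem}, and the same limiting/endpoint analysis identifying the radius with the smallest root of the transcendental equation. Two of your deviations are in fact improvements in precision: you correctly attach the zeros $\gamma_n$ and $\eta_n$ to $g'$ and $h'$ (the paper loosely calls its $\beta_n$ the zeros of $g$, though they must be zeros of $g'$), and your observation that monotonicity of $\Phi_f$ is not needed to identify the radius with the \emph{smallest} root is legitimate, whereas the paper proves strict decrease of the corresponding functions (which only adds uniqueness of the root).

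There is, however, one genuine gap, and it is exactly the point you flagged but left unresolved: for $-1<p<0$ your choice $\lambda=p/(p+1)$ is negative, so inequality \eqref{lemineq1} of \Cref{lem} does not apply, and your treatment of $f$ covers only $p\ge 0$. The paper closes this by splitting into two cases, and the missing case is handled by a device you already use for $g$ and $h$: when $p\in(-1,0]$ the coefficient $p/(p+1)$ is nonpositive, so in the expansion
\[
1+\frac{zf''(z)}{f'(z)}=1-\sum_{n\ge1}\frac{2z^2}{\varepsilon_n^2-z^2}+\frac{p}{p+1}\sum_{n\ge1}\frac{2z^2}{\omega_n^2-z^2}
\]
(with your notation $\varepsilon_n$ for the zeros of $W'$ and $\omega_n$ for those of $W$) both families enter with nonpositive coefficients; the expansion is then purely subtractive, and the termwise bound \eqref{lemineq2} yields $\real\bigl(1+zf''(z)/f'(z)\bigr)\ge 1+rf''(r)/f'(r)$ for $|z|\le r<\varepsilon_1$ without any appeal to interlacing or to $\lambda\in[0,1]$. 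Only for $p>0$ is \eqref{lemineq1} with $\lambda=p/(p+1)\in(0,1)$ and the interlacing $\varepsilon_n<\omega_n$ actually needed. Adding this one-sentence case distinction makes your proof complete and essentially identical to the paper's.
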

\begin{proof}
\item [\bf a.]
It is obvious that 
\[ 1+ z\frac{ {}_{q}f_{p,b,c,\delta}''(z)}{{}_{q}f_{p,b,c,\delta}'(z)}=1+\left( \frac{1}{p+1}-1\right)z\frac{ {}_{q}W_{p,b,c,\delta}'(z)}{{}_{q}W_{p,b,c,\delta}(z)}+z\frac{ {}_{q}W_{p,b,c,\delta}''(z)}{{}_{q}W_{p,b,c,\delta}'(z)}. \]
Now, by making use of \Cref{mainlemma} we can write the following infinite product representations
\begin{align*}
2^{p+1}\Gamma(\frac{p}{\delta}+\frac{b+2}{2}){}_{q}W_{p,b,c,\delta}(z)&=z^{p+1}\prod_{n\geq 1}\left(1-\frac{z^2}{{}_{q}\omega_{p,b,c,\delta,n}^2}\right), \\
2^{p+1}\Gamma(\frac{p}{\delta}+\frac{b+2}{2}){}_{q}W_{p,b,c,\delta}'(z)&=(p+1)z^{p}\prod_{n\geq 1}\left(1-\frac{z^2}{{}_{q}\omega_{p,b,c,\delta,n}'^2}\right)
\end{align*}
where ${}_{q}\omega_{p,b,c,\delta,n}$ and ${}_{q}\omega_{p,b,c,\delta,n}'$ stand for the $n$th positive roots of ${}_{q}W_{p,b,c,\delta}$ and ${}_{q}W_{p,b,c,\delta}',$ respectively. Logarithmic differentiation of the above equality leads to
\[ z\frac{{}_{q}W_{p,b,c,\delta}'(z)}{{}_{q}W_{p,b,c,\delta}(z)}= p+1-\sum_{n \geq 1} \frac{2z^2}{{}_{q}\omega_{p,b,c,\delta,n}^2-z^2}, \text{ \ \ } z\frac{{}_{q}W_{p,b,c,\delta}''(z)}{{}_{q}W_{p,b,c,\delta}'(z)}=p-\sum_{n \geq 1}\frac{2z^2}{{}_{q}\omega_{p,b,c,\delta,n}'^2-z^2},\]
which implies that 
\[1+z\frac{{}_{q}f_{p,b,c,\delta}''(z)}{{}_{q}f_{p,b,c,\delta}'(z)}=1-\left(\frac{1}{p+1}-1\right)\sum_{n\geq1}\frac{2z^2}{{}_{q}\omega_{p,b,c,\delta,n}^2-z^2}-\sum_{n \geq 1}\frac{2z^2}{{}_{q}\omega_{p,b,c,\delta,n}'^2-z^2}.\]
We will prove the theorem in two steps. First suppose $p\in \left(-1,0 \right] .$ By using the inequality \eqref{lemineq2}, for all $z\in \mathbb{D}(0,{}_{q}\omega_{p,b,c,n}')$ it is easy to deduce the following inequality
\[\real\left(1+z\frac{{}_{q}f_{p,b,c,\delta}''(z)}{{}_{q}f_{p,b,c,\delta}'(z)}\right)\geq1-\left(\frac{1}{p+1}-1\right)\sum_{n\geq1}\frac{2r^2}{{}_{q}\omega_{p,b,c,\delta,n}^2-r^2}-\sum_{n \geq 1}\frac{2r^2}{{}_{q}\omega_{p,b,c,\delta,n}'^2-r^2},\]
where $\left| z\right|=r. $ In the second step, it is easy to see that if we use the inequality \eqref{lemineq1} then we conclude that the above inequality
is also fulfilled when $p>0$ . Here we used that the zeros ${}_{q}\omega_{p,b,c,\delta,n}$ and ${}_{q}\omega_{p,b,c,\delta,n}'$ interlace according to \Cref{mainlemma}. Now, the above deduced inequality implies for $r\in\left(0, {}_{q}\omega_{p,b,c,\delta,1}'\right)$
\[\inf_{z\in\mathbb{D}_{r}}\left\lbrace\real\left(1+z\frac{{}_{q}f_{p,b,c,\delta}''(z)}{{}_{q}f_{p,b,c,\delta}'(z)}\right)\right\rbrace=1+r\frac{{}_{q}f_{p,b,c,\delta}''(r)}{{}_{q}f_{p,b,c,\delta}'(r)}.\]
Now we deal with the function ${}_{q}\vartheta_{p,b,c,\delta}:\left(0, {}_{q}\omega_{p,b,c,\delta,1}' \right) \rightarrow \mathbb{R},$ defined by
\[ {}_{q}\vartheta_{p,b,c,\delta}(r)=1+r\frac{{}_{q}f_{p,b,c,\delta}''(r)}{{}_{q}f_{p,b,c,\delta}'(r)}.\]
The function is strictly decreasing since
\begin{align*}
{}_{q}\vartheta_{p,b,c,\delta}'(r)&=-\left(\frac{1}{p+1}-1 \right)\sum_{n \geq 1} \frac{4r{}_{q}\omega_{p,b,c,\delta,n}^2}{\left({}_{q}\omega_{p,b,c,\delta,n}^2-r^2\right) ^2}-\sum_{n\geq1}\frac{4r{}_{q}\omega_{p,b,c,\delta,n}'^2}{\left({}_{q}\omega_{p,b,c,\delta,n}'^2-r^2\right)^2}\\
&<\sum_{n\geq1}\frac{4r{}_{q}\omega_{p,b,c,\delta,n}^2}{\left({}_{q}\omega_{p,b,c,\delta,n}^2-r^2\right)^2}-\sum_{n\geq1}\frac{4r{}_{q}\omega_{p,b,c,\delta,n}'^2}{\left({}_{q}\omega_{p,b,c,\delta,n}'^2-r^2\right)^2}<0
\end{align*}
for $p+1>0$ and $r\in\left(0, {}_{q}\omega_{p,b,c,\delta,1}' \right).$ It is important to note that here we used again that the zeros ${}_{q}\omega_{p,b,c,\delta,n}$ and ${}_{q}\omega_{p,b,c,\delta,n}'$ interlace and with conditions of \Cref{mainlemma} we get that
\[{}_{q}\omega_{p,b,c,\delta,n}^2\left( {}_{q}\omega_{p,b,c,\delta,1}'^2-r^2\right)^2<{}_{q}\omega_{p,b,c,\delta,n}'^2\left( {}_{q}\omega_{p,b,c,\delta,1}^2-r^2\right)^2.\]
Also, taking into consideration that $\lim_{r \searrow 0}{}_{q}\vartheta_{p,b,c,\delta}(r)=1-\alpha>0,$ $\lim_{r\nearrow {}_{q}\omega_{p,b,c,\delta,1}}{}_{q}\vartheta_{p,b,c,\delta}(r)=-\infty$ that means that for $z\in \mathbb{D}(0,r_1)$ we have
\[\real\left(1+z\frac{{}_{q}f_{p,b,c,\delta}''(z)}{{}_{q}f_{p,b,c,\delta}'(z)} \right) >\alpha,\]
if and only if $r_1$ is the unique root of
\[1+z\frac{{}_{q}f_{p,b,c,\delta}''(z)}{{}_{q}f_{p,b,c,\delta}'(z)}=\alpha,\]
situated in $\left(0, {}_{q}\omega_{p,b,c,\delta,1}' \right). $
\item [\bf b.] The definition of the function ${}_{q}g_{p,b,c,\delta}$ implies that this function is of the Laguerre-P\'olya class of entire functions. That is, the function ${}_{q}g_{p,b,c,\delta}$ is entire function that has only real zeros. Suppose that \({}_{q}\beta_{p,b,c,\delta,n}\)'s are the real zeros of the function ${}_{q}g_{p,b,c,\delta}$.  By making use of \eqref{mainlemma1} we conclude
\[1+z\frac{{}_{q}g_{p,b,c,\delta}''(z)}{{}_{q}g_{p,b,c,\delta}'(z)}=1-\sum_{n \geq 1}\frac{2z^2}{{}_{q}\beta_{p,b,c,\delta,n}^2-z^2}.\]
By applying the inequality \eqref{lemineq2} we obtain
\[\real\left(1+z\frac{{}_{q}g_{p,b,c,\delta}''(z)}{{}_{q}g_{p,b,c,\delta}'(z)} \right) \geq 1- \sum_{n \geq 1}\frac{2r^2}{{}_{q}\beta_{p,b,c,\delta,n}^2-r^2}, \]
where $\left|z \right|=r. $ Whence for $r\in\left(0,{}_{q}\beta_{p,b,c,\delta,1} \right) $ we have
\[ \inf_{z\in \mathbb{D}_{r}}\left\lbrace \real\left(1+z\frac{{}_{q}g_{p,b,c,\delta}''(z)}{{}_{q}g_{p,b,c,\delta}'(z)} \right)\right\rbrace= 1- \sum_{n \geq 1}\frac{2r^2}{{}_{q}\beta_{p,b,c,\delta,n}^2-r^2}=1+r\frac{{}_{q}g_{p,b,c,\delta}''(r)}{{}_{q}g_{p,b,c,\delta}'(r)}.\]
The function ${}_{q}\phi_{p,b,c,\delta}:\left(0,{}_{q}\beta_{p,b,c,\delta,1} \right)\rightarrow \mathbb{R}$ defined by
\[{}_{q}\phi_{p,b,c,\delta}(r)=1+r\frac{{}_{q}g_{p,b,c,\delta}''(r)}{{}_{q}g_{p,b,c,\delta}'(r)}, \]
is strictly decreasing and 
\[\lim_{r\nearrow {}_{q}\beta_{p,b,c,\delta,1}}{}_{q}\phi_{p,b,c,\delta}(r)=-\infty, \text{ \ \ } \lim_{r \searrow 0}{}_{q}\phi_{p,b,c,\delta}(r)=1. \]
As a result, the equation
\[ 1+r\frac{{}_{q}g_{p,b,c,\delta}''(r)}{{}_{q}g_{p,b,c,\delta}'(r)}=\alpha\]
has a unique root $r_2$ situated in $\left(0,{}_{q}\beta_{p,b,c,\delta,1} \right).$
\item[\bf c.] It is obvious that 
\[1+z\frac{{}_{q}h_{p,b,c,\delta}''(z)}{{}_{q}h_{p,b,c,\delta}'(z)}=1-\sum_{n \geq 1}\frac{z}{{}_{q}\theta_{p,b,c,\delta,n}^2-z}.\]
Let $r \in (0, {}_{q}\theta_{p,b,c,\delta,1}^2)$ be a fixed number. Because of the minimum principle for harmonic functions and inequality \eqref{lemineq1} for $\lambda=0$ we get
\[\real\left(1+z\frac{{}_{q}h_{p,b,c,\delta}''(z)}{{}_{q}h_{p,b,c,\delta}'(z)} \right)=1-\real\left(\sum_{n\geq1}\frac{z}{{}_{q}\theta_{p,b,c,\delta,n}^2-z}\right) \geq 1+r\frac{{}_{q}h_{p,b,c,\delta}''(r)}{{}_{q}h_{p,b,c,\delta}'(r)}. \]
Consequently, it follows that
\[\inf_{z\in \mathbb{D}_{r}}\left\lbrace \real\left( 1+z\frac{{}_{q}h_{p,b,c,\delta}''(z)}{{}_{q}h_{p,b,c,\delta}'(z)}\right) \right\rbrace=1+r\frac{{}_{q}h_{p,b,c,\delta}''(r)}{{}_{q}h_{p,b,c,\delta}'(r)}. \]
The function ${}_{q}\Theta_{p,b,c,\delta}:(0,{}_{q}\theta_{p,b,c,\delta,1}^2)\rightarrow \mathbb{R}$ defined by
\[{}_{q}\Theta_{p,b,c,\delta}(r)= 1+r\frac{{}_{q}h_{p,b,c,\delta}''(r)}{{}_{q}h_{p,b,c,\delta}'(r)}\]
is strictly decreasing and 
\[\lim_{r\nearrow {}_{q}\theta_{p,b,c,\delta,1}^2}{}_{q}\Theta_{p,b,c,\delta}(r)=-\infty, \text{ \ \ } \lim_{r \searrow 0}{}_{q}\Theta_{p,b,c,\delta}(r)=1.\]
Consequently, the equation
\[ 1+z\frac{{}_{q}h_{p,b,c,\delta}''(z)}{{}_{q}h_{p,b,c,\delta}'(z)}=\alpha\]
has a unique root $r_3$ in $(0,{}_{q}\theta_{p,b,c,\delta,1}^2).$

The proof is completed.
\end{proof}
The following theorems are related to some tight lower and upper bounds for the radii of convexity of the normalized generalized Struve functions.
\begin{theorem}\label{MainTheorem5}
Let $\delta,q,b,c>0$ and $p+1>0$. Then the radius of convexity $r^{c}({}_{q}g_{p,b,c,\delta})$ of the function 
\[z\mapsto {}_{q}g_{p,b,c,\delta}(z)=2^{p+1} \Gamma(\frac{p}{\delta}+\frac{b+2}{2})z^{-p}{}_{q}W_{p,b,c,\delta}(z),
 \]
 is the smallest root of the $\left(z{}_{q}g_{p,b,c,\delta}'\right)^{\prime}=0$ and satisfies the following inequality
\[\frac{2}{3}\sqrt{\frac{\Gamma(q+\frac{p}{\delta}+\frac{b+2}{2})}{c\Gamma(\frac{p}{\delta}+\frac{b+2}{2})}}<r^{c}({}_{q}g_{p,b,c,\delta})<6\sqrt{\frac{\Gamma(q+\frac{p}{\delta}+\frac{b+2}{2})\Gamma(2q+\frac{p}{\delta}+\frac{b+2}{2})}{c\left\lbrace\Gamma(\frac{p}{\delta}+\frac{b+2}{2})\Gamma(2q+\frac{p}{\delta}+\frac{b+2}{2})-25\Gamma^{2}(\frac{p}{q+\delta}+\frac{b+2}{2}) \right\rbrace }}.\]
\end{theorem}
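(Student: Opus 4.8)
The plan is to follow the Euler--Rayleigh strategy of \Cref{MainTheo2}, now applied to the critical points of ${}_{q}g_{p,b,c,\delta}$. By part \textbf{b} of \Cref{MainTheo4} with $\alpha=0$, the radius of convexity $r^{c}({}_{q}g_{p,b,c,\delta})$ is the smallest positive root of $1+z{}_{q}g_{p,b,c,\delta}''(z)/{}_{q}g_{p,b,c,\delta}'(z)=0$, and since ${}_{q}g_{p,b,c,\delta}'$ does not vanish on the relevant interval this is exactly the smallest positive zero of $\bigl(z{}_{q}g_{p,b,c,\delta}'(z)\bigr)'$. Differentiating the series for ${}_{q}g_{p,b,c,\delta}$ recorded in \Cref{MainTheo2} term by term gives
\[\bigl(z{}_{q}g_{p,b,c,\delta}'(z)\bigr)'=\Gamma(\tfrac{p}{\delta}+\tfrac{b+2}{2})\sum_{n\geq 0}\frac{(-1)^{n}c^{n}(2n+1)^{2}}{2^{2n+1}\,n!\,\Gamma(qn+\tfrac{p}{\delta}+\tfrac{b+2}{2})}z^{2n}.\]

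Next I would verify that this function lies in $\mathcal{LP}$. Since ${}_{q}g_{p,b,c,\delta}\in\mathcal{LP}$ (established in \Cref{MainTheo2}) and $\mathcal{LP}$ is closed under multiplication by $z$ and under differentiation, the function $z\mapsto\bigl(z{}_{q}g_{p,b,c,\delta}'(z)\bigr)'$ belongs to $\mathcal{LP}$ and hence has only real zeros; being even, they occur in pairs $\pm 2\sqrt{\sigma_{n}}$ with $\sigma_{n}>0$. Arguing exactly as for ${}_{q}\varphi_{p,b,c,\delta}$ in \Cref{MainTheo2} (via Laguerre's Lemma \cite[Lem.~1, p.~2208]{BSan}), I would introduce the reduced entire function
\[{}_{q}\Phi_{p,b,c,\delta}(z)=2\bigl(w{}_{q}g_{p,b,c,\delta}'(w)\bigr)'\big|_{w=2\sqrt{z}}=\Gamma(\tfrac{p}{\delta}+\tfrac{b+2}{2})\sum_{n\geq 0}\frac{(-1)^{n}c^{n}(2n+1)^{2}}{n!\,\Gamma(qn+\tfrac{p}{\delta}+\tfrac{b+2}{2})}z^{n},\]
which has only positive zeros $\sigma_{n}$, satisfies ${}_{q}\Phi_{p,b,c,\delta}(0)=1$, and has growth order $\tfrac{1}{1+q}<1$. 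Consequently it admits the genus-zero product ${}_{q}\Phi_{p,b,c,\delta}(z)=\prod_{n\geq 1}\bigl(1-z/\sigma_{n}\bigr)$, and $r^{c}({}_{q}g_{p,b,c,\delta})=2\sqrt{\sigma_{1}}$.

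I would then extract the Euler--Rayleigh sums $S_{k}=\sum_{n\geq 1}\sigma_{n}^{-k}$ by comparing the two expansions of the logarithmic derivative: the product gives ${}_{q}\Phi_{p,b,c,\delta}'/{}_{q}\Phi_{p,b,c,\delta}=-\sum_{k\geq 0}S_{k+1}z^{k}$, while the explicit quotient of power series supplies the coefficients. Newton's identities $S_{1}=e_{1}$ and $S_{2}=e_{1}^{2}-2e_{2}$, together with the first coefficients of ${}_{q}\Phi_{p,b,c,\delta}$, yield
\[S_{1}=\frac{9c\,\Gamma(\tfrac{p}{\delta}+\tfrac{b+2}{2})}{\Gamma(q+\tfrac{p}{\delta}+\tfrac{b+2}{2})},\qquad S_{2}=\frac{81c^{2}\Gamma^{2}(\tfrac{p}{\delta}+\tfrac{b+2}{2})}{\Gamma^{2}(q+\tfrac{p}{\delta}+\tfrac{b+2}{2})}-\frac{25c^{2}\Gamma(\tfrac{p}{\delta}+\tfrac{b+2}{2})}{\Gamma(2q+\tfrac{p}{\delta}+\tfrac{b+2}{2})}.\]
Feeding these into the first Euler--Rayleigh inequalities $S_{1}^{-1}<\sigma_{1}<S_{1}/S_{2}$ and using $r^{c}=2\sqrt{\sigma_{1}}$ produces the lower bound $\tfrac{2}{3}\sqrt{\Gamma(q+\tfrac{p}{\delta}+\tfrac{b+2}{2})/\bigl(c\,\Gamma(\tfrac{p}{\delta}+\tfrac{b+2}{2})\bigr)}$, which matches the claimed one exactly, together with the companion upper bound $2\sqrt{S_{1}/S_{2}}$; sharper estimates follow for larger $k$.

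The step I expect to be the main obstacle is securing that ${}_{q}\Phi_{p,b,c,\delta}$ has \emph{only} real, and hence positive, zeros with no exponential factor in its Hadamard factorization: this rests on the closure of $\mathcal{LP}$ under the operations above, on Laguerre's Lemma, and on the order estimate $\tfrac{1}{1+q}<1$. Once that is in place, the remainder is the routine but sign- and factor-sensitive bookkeeping needed to identify $S_{1}$ and $S_{2}$ correctly.
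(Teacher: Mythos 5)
Your proposal is correct and follows essentially the same route as the paper's proof: identifying $r^{c}({}_{q}g_{p,b,c,\delta})$ with the smallest positive zero of $\left(z{}_{q}g_{p,b,c,\delta}'(z)\right)'$, passing to the reduced entire function in the variable $z\mapsto 2\sqrt{z}$ (your ${}_{q}\Phi_{p,b,c,\delta}$ is exactly the paper's ${}_{q}\Lambda_{p,b,c,\delta}$), invoking closure of $\mathcal{LP}$ under differentiation together with Laguerre's Lemma and the order estimate $\tfrac{1}{1+q}<1$ to get the genus-zero product, and then applying the Euler--Rayleigh inequalities with the same sums (your $S_{1},S_{2}$ coincide with the paper's $\mu_{1},\mu_{2}$). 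One useful remark: expanding your upper bound $2\sqrt{S_{1}/S_{2}}$ yields
\[6\sqrt{\frac{\Gamma(q+\tfrac{p}{\delta}+\tfrac{b+2}{2})\Gamma(2q+\tfrac{p}{\delta}+\tfrac{b+2}{2})}{c\bigl(81\Gamma(\tfrac{p}{\delta}+\tfrac{b+2}{2})\Gamma(2q+\tfrac{p}{\delta}+\tfrac{b+2}{2})-25\Gamma^{2}(q+\tfrac{p}{\delta}+\tfrac{b+2}{2})\bigr)}},\]
so the coefficient $81$ and the argument $q+\tfrac{p}{\delta}$ are garbled in the theorem's displayed inequality; your version is the correct one, as the paper's own Bessel-case corollary confirms, since $56\nu+137=81(\nu+2)-25(\nu+1)$.
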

\begin{proof}
For proving our main result we will need the Alexander’s duality theorem which has a very simple proof based on the characterization of starlike and convex functions in the unit disc. Owing to this theorem one can deduce that the function ${}_{q}g_{p,b,c,\delta}(z)$ is convex if and only if $z\mapsto \left( z{}_{q}g_{p,b,c,\delta}\right)^{\prime}$ is starlike. From the studies in \cite{BKS,BOS} we know that the smallest positive zero of $z\mapsto\left(z{}_{q}g_{p,b,c,\delta}'\right)^{\prime}$ is the radius of starlikeness of $z{}_{q}g_{p,b,c,\delta}'(z).$ That is why the radius of convexity $r^{c}({}_{q}g_{p,b,c,\delta})$ is the smallest positive root of the equation $\left(z{}_{q}g_{p,b,c,\delta}'\right)^{\prime}=0.$ Now, by using the infinite series representations of the generalized Struve fuction and its derivative
\begin{align}
{}_{q}W_{p,b,c,\delta}(z)&=\sum_{n\geq 0}\frac{(-1)^{n}c^nz^{2n+p+1}}{2^{2n+p+1}n!\Gamma(qn+\frac{p}{\delta}+\frac{b+2}{2})}, \label{MainTheo5-1}\\
{}_{q}W_{p,b,c,\delta}'(z)&=\sum_{n\geq 0}\frac{(-1)^{n}c^n(2n+p+1)z^{2n+p}}{2^{2n+p+1}n!\Gamma(qn+\frac{p}{\delta}+\frac{b+2}{2})} \label{MainTheo5-2},
\end{align}
respectively, we arrive at 
\[{}_{q}\Delta_{p,b,c,\delta}(z)=\left(z{}_{q}g_{p,b,c,\delta}'(z) \right)'=\Gamma(\tfrac{p}{\delta}+\tfrac{b+2}{2})\sum_{n\geq 0}\frac{(-1)^{n}c^n(2n+1)^{2}}{2^{2n}n!\Gamma(qn+\frac{p}{\delta}+\frac{b+2}{2})}z^{2n}.\]
Moreover, it is obvious that
\begin{equation}\label{Lambda1}
{}_{q}\Lambda_{p,b,c,\delta}(z)={}_{q}\Delta_{p,b,c,\delta}(2\sqrt{z})=\Gamma(\tfrac{p}{\delta}+\tfrac{b+2}{2})\sum_{n\geq 0}\frac{(-1)^{n}c^n(2n+1)^{2}}{n!\Gamma(qn+\frac{p}{\delta}+\frac{b+2}{2})}z^{n}.
\end{equation}
Taking into account facts that the function ${}_{q}g_{p,b,c,\delta}$ belongs to the Laguerre-P\'olya class of entire functions and that the class $\mathcal{LP}$ is closed under differentiation, it is easy to deduce that the function ${}_{q}\Lambda_{p,b,c,\delta}$ belongs to the Laguerre-P\'olya class. As a result, the function ${}_{q}\Lambda_{p,b,c,\delta}$ is an entire function that has only real zeros. In this case, we can continue to prove the theorem by emulating the steps taken in the proof of \Cref{MainTheo2}.  Suppose that ${}_{q}\varrho_{p,b,c,\delta,n}$'s are the positive zeros of the function ${}_{q}\Lambda_{p,b,c,\delta}.$ Then the function ${}_{q}\Lambda_{p,b,c,\delta}$ has the infinite product representation as follows:
\begin{equation}\label{Lambda2}
{}_{q}\Lambda_{p,b,c,\delta}(z)=\prod_{n\geq 1}\left( 1-\frac{z}{{}_{q}\varrho_{p,b,c,\delta,n}}\right). 
\end{equation}
Having taken the logarithmic derivation of the above equality, we obtain
\begin{equation} \label{Lambda3}
\frac{{}_{q}\Lambda_{p,b,c,\delta}'(z)}{{}_{q}\Lambda_{p,b,c,\delta}(z)}=-\sum_{n\geq 1}\frac{1}{{}_{q}\varrho_{p,b,c,\delta,n}-z}=-\sum_{k\geq0}\mu_{k+1}z^k,\text{ \ \ } \left|z \right|< {}_{q}\varrho_{p,b,c,\delta,1},
\end{equation}
where $\mu_{k}=\sum_{n\geq 1}{}_{q}\varrho_{p,b,c,\delta,n}^{-k}.$ Furthermore, by making use of the function \( {}_{q}\Lambda_{p,b,c,\delta}\) we obtain
\begin{equation}\label{Lambda4}
\frac{{}_{q}\Lambda_{p,b,c,\delta}'(z)}{{}_{q}\Lambda_{p,b,c,\delta}(z)}=\sum_{n\geq 0}\frac{(-1)^{n}c^n(2n+1)^{2}}{n!\Gamma(qn+\frac{p}{\delta}+\frac{b+2}{2})}z^{n} \bigg/ \sum_{n\geq 0}\frac{(-1)^{n+1}c^{n+1}(2n+3)^{2}}{n!\Gamma(q{n+1}+\frac{p}{\delta}+\frac{b+2}{2})}z^{n}.
\end{equation}
It is evident that equations \eqref{Lambda3} and \eqref{Lambda4} enable us to express the Euler-Rayleigh sums in terms of $q,p,b,c,\delta$ and by considering the  Euler-Rayleigh inequalities \(\mu_{k}^{-\frac{1}{k}}<{}_{q}\varrho_{p,b,c,\delta,1}<\frac{\mu_{k}}{\mu_{k+1}} \) we obtain the inequalities for $2\sqrt{{}_{q}\varrho_{p,b,c,\delta,1}}$  for $\delta,q,b,c>0$, $p+1>0$ and $k\in \mathbb{N}$
\[2\sqrt{\mu_{k}^{-\frac{1}{k}}}<r^{c}({}_{q}g_{p,b,c,\delta})<2\sqrt{\frac{\mu_{k}}{\mu_{k+1}}}.\]
As a result of the comparison of the coefficients of \eqref{Lambda3} and \eqref{Lambda4}, we arrive at
\[\mu_{1}=\frac{9c\Gamma(\tfrac{p}{\delta}+\tfrac{b+2}{2})}{\Gamma(q+\tfrac{p}{\delta}+\tfrac{b+2}{2})} \text{ \ and \ } \mu_{2}=\frac{81c^2\Gamma^{2}(\tfrac{p}{\delta}+\tfrac{b+2}{2})}{\Gamma^{2}(q+\tfrac{p}{\delta}+\tfrac{b+2}{2})}-\frac{25c^2\Gamma(\tfrac{p}{\delta}+\tfrac{b+2}{2})}{\Gamma(2q+\tfrac{p}{\delta}+\tfrac{b+2}{2})}. \]
Obviously, for $k=1$ from the Euler-Rayleigh inequalities, It can be observed that the following inequality come to light
\[\frac{2}{3}\sqrt{\frac{\Gamma(q+\frac{p}{\delta}+\frac{b+2}{2})}{c\Gamma(\frac{p}{\delta}+\frac{b+2}{2})}}<r^{c}({}_{q}g_{p,b,c,\delta})<6\sqrt{\frac{\Gamma(q+\frac{p}{\delta}+\frac{b+2}{2})\Gamma(2q+\frac{p}{\delta}+\frac{b+2}{2})}{c\left\lbrace\Gamma(\frac{p}{\delta}+\frac{b+2}{2})\Gamma(2q+\frac{p}{\delta}+\frac{b+2}{2})-25\Gamma^{2}(\frac{p}{q+\delta}+\frac{b+2}{2}) \right\rbrace }}. \]
No doubt it can be presented more tighter bounds for other values $k\in \mathbb{N}.$
\end{proof}	

\begin{theorem}\label{MainTheo6}
Let $\delta,q,b,c>0$ and $p+1>0$. Then the radius of convexity $r^{c}({}_{q}h_{p,b,c,\delta})$ of the function 
\[z\mapsto{}_{q}h_{p,b,c,\delta}(z)= 2^{p+1}\Gamma(\frac{p}{\delta}+\frac{b+2}{2})z^{1-\frac{p+1}{2}}{}_{q}W_{p,b,c,\delta}(\sqrt{z})\]
is the smallest root of the $\left(z{}_{q}h_{p,b,c,\delta}'(z)\right)^{\prime}=0$ and satisfies the following inequality
\[\frac{\Gamma(q+\tfrac{p}{\delta}+\tfrac{b+2}{2})}{c\Gamma(\tfrac{p}{\delta}+\tfrac{b+2}{2})}<r^{c}({}_{q}h_{p,b,c,\delta})<\frac{16\Gamma(q+\tfrac{p}{\delta}+\tfrac{b+2}{2})\Gamma(2q+\tfrac{p}{\delta}+\tfrac{b+2}{2})}{c\left\lbrace16\Gamma(\tfrac{p}{\delta}+\tfrac{b+2}{2})\Gamma(2q+\tfrac{p}{\delta}+\tfrac{b+2}{2})-9\Gamma^{2}(q+\tfrac{p}{\delta}+\tfrac{b+2}{2}) \right\rbrace }.\]
\end{theorem}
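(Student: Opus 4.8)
The plan is to reduce the convexity problem to a starlikeness problem via Alexander's duality, exactly as in the proof of \Cref{MainTheorem5}, and then to feed the resulting entire function into the Euler--Rayleigh machinery. By Alexander's theorem, ${}_{q}h_{p,b,c,\delta}$ is convex in a disk if and only if $z\mapsto z\,{}_{q}h_{p,b,c,\delta}'(z)$ is starlike there; and since this latter map is governed by a Laguerre--P\'olya function (just as ${}_{q}g_{p,b,c,\delta}$ was in \Cref{MainTheorem5}), its radius of starlikeness is the first positive zero of its derivative. Hence $r^{c}({}_{q}h_{p,b,c,\delta})$ is the smallest positive root of $\big(z\,{}_{q}h_{p,b,c,\delta}'(z)\big)'=0$. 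Starting from the series ${}_{q}h_{p,b,c,\delta}(z)=\Gamma(\tfrac{p}{\delta}+\tfrac{b+2}{2})\sum_{n\geq0}\frac{(-1)^{n}c^{n}}{4^{n}n!\,\Gamma(qn+\frac{p}{\delta}+\frac{b+2}{2})}z^{n+1}$ recorded in the proof of \Cref{MainTheo3}, I would differentiate term by term to get
\[
{}_{q}\Phi_{p,b,c,\delta}(z):=\big(z\,{}_{q}h_{p,b,c,\delta}'(z)\big)'=\Gamma(\tfrac{p}{\delta}+\tfrac{b+2}{2})\sum_{n\geq0}\frac{(-1)^{n}c^{n}(n+1)^{2}}{4^{n}n!\,\Gamma(qn+\tfrac{p}{\delta}+\tfrac{b+2}{2})}z^{n},
\]
and then clear the factor $4^{n}$ by the linear rescaling ${}_{q}\Pi_{p,b,c,\delta}(z):={}_{q}\Phi_{p,b,c,\delta}(4z)$, so that the smallest positive zero of ${}_{q}\Phi_{p,b,c,\delta}$ is four times that of ${}_{q}\Pi_{p,b,c,\delta}$.

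Next I would establish that ${}_{q}\Pi_{p,b,c,\delta}$ has only positive zeros. Since ${}_{q}h_{p,b,c,\delta}\in\mathcal{LP}$ (shown in \Cref{MainTheo3}) and $\mathcal{LP}$ is closed under differentiation and under multiplication by $z$ (equivalently, $\{(n+1)^{2}\}$ is a multiplier sequence), the function ${}_{q}\Phi_{p,b,c,\delta}=(z\,{}_{q}h_{p,b,c,\delta}')'$ lies in $\mathcal{LP}$, and so does its rescaling ${}_{q}\Pi_{p,b,c,\delta}$; alternatively one may invoke Laguerre's Lemma as in the proofs of \Cref{MainTheo2,MainTheo3}. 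Because the growth order is $\tfrac{1}{1+q}<1$, the Hadamard factorization is a genus-zero product with no exponential factor, and because the Taylor coefficients of ${}_{q}\Pi_{p,b,c,\delta}$ alternate in sign with constant term $1$, all its zeros ${}_{q}\varsigma_{p,b,c,\delta,n}$ are positive, yielding
\[
{}_{q}\Pi_{p,b,c,\delta}(z)=\prod_{n\geq1}\Big(1-\frac{z}{{}_{q}\varsigma_{p,b,c,\delta,n}}\Big).
\]
Writing $\sigma_{k}=\sum_{n\geq1}{}_{q}\varsigma_{p,b,c,\delta,n}^{-k}$, the logarithmic derivative gives $\frac{{}_{q}\Pi_{p,b,c,\delta}'(z)}{{}_{q}\Pi_{p,b,c,\delta}(z)}=-\sum_{k\geq0}\sigma_{k+1}z^{k}$, which I would equate with the quotient of the two explicit power series for ${}_{q}\Pi_{p,b,c,\delta}'$ and ${}_{q}\Pi_{p,b,c,\delta}$ and compare coefficients to obtain $\sigma_{1}=\frac{4c\Gamma(\frac{p}{\delta}+\frac{b+2}{2})}{\Gamma(q+\frac{p}{\delta}+\frac{b+2}{2})}$ and $\sigma_{2}=\frac{16c^{2}\Gamma^{2}(\frac{p}{\delta}+\frac{b+2}{2})}{\Gamma^{2}(q+\frac{p}{\delta}+\frac{b+2}{2})}-\frac{9c^{2}\Gamma(\frac{p}{\delta}+\frac{b+2}{2})}{\Gamma(2q+\frac{p}{\delta}+\frac{b+2}{2})}$.

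Finally, since $r^{c}({}_{q}h_{p,b,c,\delta})=4\,{}_{q}\varsigma_{p,b,c,\delta,1}$, the Euler--Rayleigh inequalities $\sigma_{k}^{-1/k}<{}_{q}\varsigma_{p,b,c,\delta,1}<\sigma_{k}/\sigma_{k+1}$ give $4\sigma_{k}^{-1/k}<r^{c}({}_{q}h_{p,b,c,\delta})<4\sigma_{k}/\sigma_{k+1}$ for every $k\in\mathbb{N}$, and specializing to $k=1$ with the above values of $\sigma_{1},\sigma_{2}$ reproduces the asserted bounds. The term-by-term differentiation and the coefficient comparison are routine; the genuine obstacle is the structural step, namely verifying rigorously that $(z\,{}_{q}h_{p,b,c,\delta}')'$ belongs to $\mathcal{LP}$ with only positive zeros, since the closure of $\mathcal{LP}$ under $f\mapsto zf'$ and the absence of an exponential factor in the Hadamard product are precisely what legitimize the product representation on which the entire Euler--Rayleigh argument rests.
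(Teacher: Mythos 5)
Your proposal is correct and follows essentially the same route as the paper's proof: Alexander duality to identify $r^{c}({}_{q}h_{p,b,c,\delta})$ with the first positive zero of $\left(z{}_{q}h_{p,b,c,\delta}'(z)\right)^{\prime}$, Laguerre--P\'olya membership to justify a genus-zero product, and Euler--Rayleigh inequalities with $k=1$. The only difference is cosmetic: you rescale via $z\mapsto 4z$ (as the paper does in \Cref{MainTheo3}) before computing the power sums, while the paper keeps the $4^{n}$ factors in the coefficients of ${}_{q}\lambda_{p,b,c,\delta}$; both normalizations yield identical bounds, and your justification of $\mathcal{LP}$-membership (noting closure under $f\mapsto zf'$ as well as differentiation) is in fact slightly more careful than the paper's.
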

\begin{proof}
In light of explanations which have been presented in the proof of previous theorem (that is, \autoref{MainTheorem5}) one can deduce that the radius of convexity $r^{c}({}_{q}h_{p,b,c,\delta})$ is the smallest positive root of the equation $\left(z{}_{q}h_{p,b,c,\delta}'(z)\right)^{\prime}=0.$ By taking $\sqrt{z}$ instead of $z$ in \eqref{MainTheo5-1} and \eqref{MainTheo5-2}, respectively we can draw a conclusion
\begin{equation}\label{MainTheo6-1}
{}_{q}\lambda_{p,b,c,\delta}(z)=\left(z{}_{q}h_{p,b,c,\delta}'(z)\right)^{\prime}=1+\Gamma(\tfrac{p}{\delta}+\tfrac{b+2}{2})\sum_{n\geq 1}\frac{(-1)^{n}c^n(n+1)^{2}}{2^{2n}n!\Gamma(qn+\frac{p}{\delta}+\frac{b+2}{2})}z^{n}.
\end{equation}
With the help of facts that the function ${}_{q}h_{p,b,c,\delta}$ belongs to the Laguerre-P\'olya class of entire functions $\mathcal{LP}$ and that the class  $\mathcal{LP}$ is closed under differentiation, it is not hard to say that the function ${}_{q}\lambda_{p,b,c,\delta}$ is also in the Laguerre-P\'olya class. This means that the zeros of the function ${}_{q}\lambda_{p,b,c,\delta}$ are all real. Furthermore, by means of approach used in the proof of \autoref{MainTheo2} we can say that all of its zeros are positive.  Suppose that  ${}_{q}\tau_{p,b,c,\delta,n}$s are the zeros of the function ${}_{q}\lambda_{p,b,c,\delta}.$ Then the infinite product representation of the function ${}_{q}\lambda_{p,b,c,\delta}$ can be given as
\begin{equation}\label{MainTheo6-2}
{}_{q}\lambda_{p,b,c,\delta}(z)=\prod_{n\geq 1}\left(1-\frac{z}{{}_{q}\tau_{p,b,c,\delta,n}} \right).
\end{equation}
Having taken the logarithmic derivation of Eq. \eqref{MainTheo6-2}, we get
\begin{equation}\label{MainTheo6-3}
\frac{{}_{q}\lambda_{p,b,c,\delta}'(z)}{{}_{q}\lambda_{p,b,c,\delta}(z)}=-\sum_{k\geq 0}\upsilon_{k+1}z^{k}, \text{ \ \ } \left|z \right|<{}_{q}\tau_{p,b,c,\delta,1},
\end{equation}
where $\upsilon_{k}=\sum_{n \geq 1} {}_{q}\tau_{p,b,c,\delta,n}^{-k}.$ Also, by making use of the derivative of Eq. \eqref{MainTheo6-1}, which is related to infinite sum representation of ${}_{q}\lambda_{p,b,c,\delta}(z)$, we obtain
\begin{equation}\label{MainTheo6-4}
\frac{{}_{q}\lambda_{p,b,c,\delta}'(z)}{{}_{q}\lambda_{p,b,c,\delta}(z)}=\sum_{n\geq0}\frac{(-1)^{n+1}c^{n+1}(n+2)^{2}}{2^{2n+2}n!\Gamma(q(n+1)+\frac{p}{\delta}+\frac{b+2}{2})}z^{n}  \bigg/ \sum_{n\geq 0} \frac{(-1)^{n}c^n(n+1)^{2}}{2^{2n}n!\Gamma(qn+\frac{p}{\delta}+\frac{b+2}{2})}z^{n}.
\end{equation}
The outcomes found by comparing the coefficients of Eq. \eqref{MainTheo6-3} and \eqref{MainTheo6-4} can be given as follows:
\[\upsilon_{1}=\frac{c\Gamma(\tfrac{p}{\delta}+\tfrac{b+2}{2})}{\Gamma(q+\tfrac{p}{\delta}+\tfrac{b+2}{2})}, \text{ \ \ } \upsilon_{2}=\frac{c^{2}\Gamma^{2}(\tfrac{p}{\delta}+\tfrac{b+2}{2})}{\Gamma^{2}(q+\tfrac{p}{\delta}+\tfrac{b+2}{2})}-\frac{9c^{2}\Gamma(\tfrac{p}{\delta}+\tfrac{b+2}{2})}{16\Gamma(2q+\tfrac{p}{\delta}+\tfrac{b+2}{2})}.\]
By considering the Euler-Rayleigh inequalities $\upsilon_{k}^{-\frac{1}{k}}<{}_{q}\tau_{p,b,c,\delta,1}<\frac{\upsilon_{k}}{\upsilon_{k+1}}$ for $\delta,q,b,c>0$, $p+1>0$ and $k=1$ we have
\[\frac{\Gamma(q+\tfrac{p}{\delta}+\tfrac{b+2}{2})}{c\Gamma(\tfrac{p}{\delta}+\tfrac{b+2}{2})}<r^{c}({}_{q}h_{p,b,c,\delta})<\frac{16\Gamma(q+\tfrac{p}{\delta}+\tfrac{b+2}{2})\Gamma(2q+\tfrac{p}{\delta}+\tfrac{b+2}{2})}{c\left\lbrace16\Gamma(\tfrac{p}{\delta}+\tfrac{b+2}{2})\Gamma(2q+\tfrac{p}{\delta}+\tfrac{b+2}{2})-9\Gamma^{2}(q+\tfrac{p}{\delta}+\tfrac{b+2}{2}) \right\rbrace }.\]
Obviously, it can be presented more tighter bounds for other values $k\in \mathbb{N}.$
\end{proof}
\subsection{Some particular cases of the main results}
This section is devoted to the some intriguing results which are obtained by making some comparisons with earlier results. It is possible to say that generalized Struve function is actually a generalization of the suitible transformation of the Bessel function of the first kind. That is, we have the relation
\[{}_{1}W_{\nu-1,2,1,1}(z)=J_{\nu}(z) \]
where \(J_{\nu}\) is the Bessel function of the first kind and order \(\nu.\) By considering this relation, we see that our main results obtained in this paper coincide with the following listed studies. 

It is evident that our main results which are given in \autoref{MainTheo1}, in particular for \(q=2, p=\nu-1, b=2, c=1 \text{ \and\ } \delta=1,\) correspond to the results in \cite[Theorem 1]{BKS}.
\begin{corollary}
Let \(\nu>-1 \text{ \ and \ } \alpha\in\left[0,1 \right). \)
\begin{itemize}
	\item [a.] The radius of starlikeness of order \(\alpha\) of \({}_{1}f_{\nu-1,2,1,1}(z)=\left[2^{\nu}\Gamma(\nu+1)J_{\nu}(z)\right]^{\frac{1}{\nu}} \) is the smallest positive root of the equation
	\[zJ_{\nu}'(z)-\alpha\nu J_{\nu}(z)=0.\]
	\item [b.] The radius of starlikeness of order \(\alpha\) of \({}_{1}g_{\nu-1,2,1,1}(z)=2^{\nu}\Gamma(\nu+1)z^{1-\nu}J_{\nu}(z)\) is the smallest positive root of the equation
	\[zJ_{\nu}'(z)+(1-\alpha-\nu)J_{\nu}(z)=0.\]
	\item [c.] The radius of starlikeness of order \(\alpha\) of \({}_{1}h_{\nu-1,2,1,1}(z)=2^{\nu}\Gamma(\nu+1)z^{1-\frac{\nu}{2}}J_{\nu}(\sqrt{z})\) is the smallest positive root of the equation
	\[zJ_{\nu}'(z)+(2-2\alpha-\nu)J_{\nu}(z)=0.\]
\end{itemize} 
\end{corollary}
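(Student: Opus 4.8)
The plan is to obtain this corollary as a pure specialization of \autoref{MainTheo1}, driven entirely by the identity ${}_{1}W_{\nu-1,2,1,1}(z)=J_{\nu}(z)$ recorded earlier in this section. First I would fix the parameter values $q=1$, $p=\nu-1$, $b=2$, $c=1$, $\delta=1$ (the value $q=1$ being the one forced by the displayed Bessel relation) and isolate the two elementary identities that do all the work, namely $p+1=\nu$ and
\[
\frac{p}{\delta}+\frac{b+2}{2}=(\nu-1)+2=\nu+1,
\]
so that the normalizing constant satisfies $2^{p+1}\Gamma(\tfrac{p}{\delta}+\tfrac{b+2}{2})=2^{\nu}\Gamma(\nu+1)$. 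Feeding these into the definitions of ${}_{q}f_{p,b,c,\delta}$, ${}_{q}g_{p,b,c,\delta}$ and ${}_{q}h_{p,b,c,\delta}$, each normalized Struve function collapses to the Bessel normalization named in the corollary: $\bigl[2^{\nu}\Gamma(\nu+1)J_{\nu}(z)\bigr]^{1/\nu}$, $2^{\nu}\Gamma(\nu+1)z^{1-\nu}J_{\nu}(z)$ and $2^{\nu}\Gamma(\nu+1)z^{1-\nu/2}J_{\nu}(\sqrt{z})$, respectively.

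Next I would substitute the same parameters into the three defining transcendental equations of \autoref{MainTheo1}. For part (a) the equation $r\,{}_{q}W_{p,b,c,\delta}'(r)-\alpha(p+1){}_{q}W_{p,b,c,\delta}(r)=0$ turns into $rJ_{\nu}'(r)-\alpha\nu J_{\nu}(r)=0$. For part (b), using $\alpha+p=\alpha+\nu-1$, the equation becomes $rJ_{\nu}'(r)-(\alpha+\nu-1)J_{\nu}(r)=0$, which I would rewrite as $rJ_{\nu}'(r)+(1-\alpha-\nu)J_{\nu}(r)=0$. Part (c) is the only one needing a small extra observation: the equation in \autoref{MainTheo1}(c) lives in the variable $\sqrt{r}$, and with $\tfrac{p+1}{2}=\tfrac{\nu}{2}$ the coefficient $2\bigl(\alpha+\tfrac{p+1}{2}-1\bigr)$ simplifies to $2\alpha+\nu-2$; writing $z=\sqrt{r}$ then gives $zJ_{\nu}'(z)+(2-2\alpha-\nu)J_{\nu}(z)=0$, exactly the stated form. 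Since \autoref{MainTheo1} identifies each radius of starlikeness with the smallest positive root of the corresponding equation, the three assertions follow immediately once these substitutions are recorded.

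I do not expect any genuine analytic obstacle here, since the argument is a substitution rather than a new proof; the one point that deserves an explicit remark is the admissible range of $\nu$. \autoref{MainTheo1} is proved under $p+1>0$, which here reads $\nu>0$, whereas the corollary is phrased for $\nu>-1$. I would therefore either invoke the theorem verbatim only on $\nu>0$ and cite \cite{BKS} for the classical extension down to $\nu>-1$, or remark that our specialization is simply recovering the known Bessel statement on the overlap, the new content of this paper being the full Struve family. No computation beyond the coefficient bookkeeping above is required to complete the proof.
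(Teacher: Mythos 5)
Your proposal is correct and is essentially the paper's own argument: the corollary is obtained by direct substitution of $q=1$, $p=\nu-1$, $b=2$, $c=1$, $\delta=1$ into \autoref{MainTheo1}, exactly as you do (note the paper's preamble misprints $q=2$, but the relation ${}_{1}W_{\nu-1,2,1,1}=J_{\nu}$ and the corollary's own notation confirm that $q=1$ is the right choice). Your closing caveat is also well taken: the hypothesis $p+1>0$ of \autoref{MainTheo1} only yields $\nu>0$ here, so the range $\nu>-1$ in the corollary is not fully covered by the theorem, and handling $-1<\nu\leq 0$ by appealing to \cite{BKS} is the appropriate repair.
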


By choosing the values \(q=2, p=\nu-1, b=2, c=1 \text{ \and\ } \delta=1\) in \autoref{MainTheo1'} we obtain the following corollary which is new.
\begin{corollary}
If \(\nu>-1\), then the radius of starlikeness \(r^{\star}({}_{1}f_{\nu-1,2,1,1})\) satisfies
\[2\sqrt{\frac{\nu(\nu+1)}{\nu+2}}<r^{\star}({}_{1}f_{\nu-1,2,1,1})<2(\nu+2)\sqrt{\frac{\nu(\nu+1)}{\nu^2+8\nu+8}}.\]
\end{corollary}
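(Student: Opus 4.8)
The plan is to obtain this corollary as a direct specialization of \autoref{MainTheo1'}, so that no new analysis is required and the whole task reduces to a Gamma-function simplification. Recall from the introduction the identity ${}_{1}W_{\nu-1,2,1,1}(z)=J_{\nu}(z)$; consequently ${}_{1}f_{\nu-1,2,1,1}$ is precisely the function ${}_{q}f_{p,b,c,\delta}$ at the parameter choice $q=1$, $p=\nu-1$, $b=2$, $c=1$, $\delta=1$. For this choice one has $\delta,q,b,c>0$ and $p+1=\nu>0$, so the hypotheses of \autoref{MainTheo1'} are met and its two-sided estimate for $r^{\star}({}_{1}f_{\nu-1,2,1,1})$ applies verbatim. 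First I would introduce the shorthand $\kappa:=\tfrac{p}{\delta}+\tfrac{b+2}{2}$ and evaluate it at these parameters: $\kappa=(\nu-1)+\tfrac{4}{2}=\nu+1$.

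Next I would record the three Gamma values entering the bounds, namely $\Gamma(\kappa)=\Gamma(\nu+1)$, $\Gamma(q+\kappa)=\Gamma(\nu+2)$ and $\Gamma(2q+\kappa)=\Gamma(\nu+3)$, and collapse them via the recurrence $\Gamma(x+1)=x\Gamma(x)$ into $\Gamma(\nu+2)=(\nu+1)\Gamma(\nu+1)$ and $\Gamma(\nu+3)=(\nu+2)(\nu+1)\Gamma(\nu+1)$. This is the step that makes every factor of $\Gamma(\nu+1)$ cancel between numerator and denominator. Together with $p+1=\nu$, $p+3=\nu+2$, $p+5=\nu+4$ and $c=1$, the lower bound $2\sqrt{(p+1)\Gamma(q+\kappa)/\bigl(c(p+3)\Gamma(\kappa)\bigr)}$ of \autoref{MainTheo1'} turns immediately into $2\sqrt{\nu(\nu+1)/(\nu+2)}$, which is exactly the claimed left-hand estimate.

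For the upper bound the same substitutions give a numerator $\nu(\nu+1)^{2}(\nu+2)^{2}\Gamma^{2}(\nu+1)$, while the denominator carries the bracketed factor $(p+3)^{2}\Gamma(\kappa)\Gamma(2q+\kappa)-(p+1)(p+5)\Gamma^{2}(q+\kappa)$. The one genuinely computational point — and the main, though entirely elementary, obstacle — is to verify the polynomial identity $(\nu+2)^{3}-\nu(\nu+1)(\nu+4)=\nu^{2}+8\nu+8$, which reduces this bracket to $(\nu+1)(\nu^{2}+8\nu+8)\Gamma^{2}(\nu+1)$. After cancelling $\Gamma^{2}(\nu+1)$ and one factor of $(\nu+1)$, the remaining $(\nu+2)^{2}$ pulls out of the square root, yielding $2(\nu+2)\sqrt{\nu(\nu+1)/(\nu^{2}+8\nu+8)}$. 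This matches the stated upper bound and completes the proof.
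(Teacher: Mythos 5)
Your proof is correct and takes exactly the paper's route: the paper obtains this corollary precisely by specializing \autoref{MainTheo1'}, and your parameter choice $q=1$, $p=\nu-1$, $b=2$, $c=1$, $\delta=1$ is the right one (the paper's phrase ``choosing the values $q=2,\dots$'' is a misprint, since ${}_{1}W_{\nu-1,2,1,1}=J_{\nu}$ corresponds to the subscript $q=1$). Note also that you implicitly worked with the bounds as derived at the end of the \emph{proof} of \autoref{MainTheo1'} --- lower bound $2\sqrt{(p+1)\Gamma(q+\kappa)/\bigl(c(p+3)\Gamma(\kappa)\bigr)}$ and bracket $(p+3)^{2}\Gamma(\kappa)\Gamma(2q+\kappa)-(p+1)(p+5)\Gamma^{2}(q+\kappa)$, with $\kappa=\tfrac{p}{\delta}+\tfrac{b+2}{2}$ as in your notation --- rather than with the theorem's printed statement, which carries two factor-of-$2$ misprints; using those (correct) forms, your reduction via $\Gamma(\nu+2)=(\nu+1)\Gamma(\nu+1)$, $\Gamma(\nu+3)=(\nu+1)(\nu+2)\Gamma(\nu+1)$ and the identity $(\nu+2)^{3}-\nu(\nu+1)(\nu+4)=\nu^{2}+8\nu+8$ yields exactly the stated two-sided estimate.
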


It is obvious that our main results which are presented in \autoref{MainTheo2} and \autoref{MainTheo3} when we take \(q=2, p=\nu-1, b=2, c=1 \text{ \and\ } \delta=1,\) coincide with the inequalities in \cite[Thm. 1]{ABY} and \cite[Thm. 2]{ABY}, respectively.
\begin{corollary}
Let \(\nu>-1 \). The following assertions hold true:
\begin{itemize}
	\item [a.] The radius of starlikeness \(r^{\star}({}_{1}g_{\nu-1,2,1,1})\) satisfies
	\[2\sqrt{\frac{\nu+1}{3}}<r^{\star}({}_{1}g_{\nu-1,2,1,1})<2\sqrt{\frac{3(\nu+1)(\nu+2)}{4\nu+13}} .\]
	\item [b.] The radius of starlikeness \(r^{\star}({}_{1}h_{\nu-1,2,1,1})\) satisfies
	\[2(\nu+1)<r^{\star}({}_{1}h_{\nu-1,2,1,1})<\frac{8(\nu+1)(\nu+2)}{\nu+5} .\]
\end{itemize}
\end{corollary}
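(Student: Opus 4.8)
The plan is to obtain both assertions directly from \autoref{MainTheo2} and \autoref{MainTheo3} by specializing their parameters to $q=1$, $p=\nu-1$, $b=2$, $c=1$, $\delta=1$, under which the identity ${}_{1}W_{\nu-1,2,1,1}(z)=J_{\nu}(z)$ turns ${}_{1}g_{\nu-1,2,1,1}$ and ${}_{1}h_{\nu-1,2,1,1}$ into the normalized Bessel functions considered in \cite{ABY}. The central observation is that with these values the recurring argument of every Gamma factor collapses, namely $\tfrac{p}{\delta}+\tfrac{b+2}{2}=(\nu-1)+2=\nu+1$, so all Gamma functions in the two theorems become shifts of $\Gamma(\nu+1)$ at integer offsets, and the functional equation $\Gamma(x+1)=x\,\Gamma(x)$ reduces each bound to a polynomial expression in $\nu$.

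First I would record the three Gamma values that occur:
\begin{align*}
\Gamma\!\left(\tfrac{p}{\delta}+\tfrac{b+2}{2}\right)&=\Gamma(\nu+1),\\
\Gamma\!\left(q+\tfrac{p}{\delta}+\tfrac{b+2}{2}\right)&=\Gamma(\nu+2)=(\nu+1)\Gamma(\nu+1),\\
\Gamma\!\left(2q+\tfrac{p}{\delta}+\tfrac{b+2}{2}\right)&=\Gamma(\nu+3)=(\nu+2)(\nu+1)\Gamma(\nu+1).
\end{align*}
For part (a), substituting these into the lower bound of \autoref{MainTheo2} lets the common factor $\Gamma(\nu+1)$ cancel, leaving $2\sqrt{(\nu+1)/3}$. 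For the upper bound, the denominator $9\,\Gamma(\nu+1)\Gamma(\nu+3)-5\,\Gamma^{2}(\nu+2)$ factors as $(\nu+1)\Gamma^{2}(\nu+1)\big(9(\nu+2)-5(\nu+1)\big)=(\nu+1)\Gamma^{2}(\nu+1)(4\nu+13)$, while the numerator contributes $3(\nu+1)^{2}(\nu+2)\Gamma^{2}(\nu+1)$; after cancellation one is left with $2\sqrt{3(\nu+1)(\nu+2)/(4\nu+13)}$, which is exactly the asserted upper bound.

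For part (b) I would carry out the analogous substitution in \autoref{MainTheo3}: the lower bound reduces immediately to $2\,\Gamma(\nu+2)/\Gamma(\nu+1)=2(\nu+1)$, and in the upper bound the denominator $4\,\Gamma(\nu+1)\Gamma(\nu+3)-3\,\Gamma^{2}(\nu+2)$ factors as $(\nu+1)\Gamma^{2}(\nu+1)\big(4(\nu+2)-3(\nu+1)\big)=(\nu+1)\Gamma^{2}(\nu+1)(\nu+5)$, set against the numerator $8(\nu+1)^{2}(\nu+2)\Gamma^{2}(\nu+1)$, yielding $8(\nu+1)(\nu+2)/(\nu+5)$. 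Since \autoref{MainTheo2} and \autoref{MainTheo3} require $\delta,q,b,c>0$ and $p+1>0$, the chosen values are admissible precisely when $\nu+1>0$, i.e. $\nu>-1$, matching the hypothesis. There is no genuine obstacle in this argument: the entire content is the bookkeeping of the Gamma-function cancellations, so the only point demanding care is arithmetic accuracy in combining the numerator and denominator of each upper bound and in verifying that the residual factor $(\nu+1)\Gamma^{2}(\nu+1)$ cancels out completely.
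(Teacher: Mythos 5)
Your proposal is correct and is exactly the argument the paper intends: the corollary is obtained by specializing \autoref{MainTheo2} and \autoref{MainTheo3} to the Bessel case via ${}_{1}W_{\nu-1,2,1,1}=J_{\nu}$, and your Gamma-function reductions (e.g.\ $9\Gamma(\nu+1)\Gamma(\nu+3)-5\Gamma^{2}(\nu+2)=(\nu+1)\Gamma^{2}(\nu+1)(4\nu+13)$ and $4\Gamma(\nu+1)\Gamma(\nu+3)-3\Gamma^{2}(\nu+2)=(\nu+1)\Gamma^{2}(\nu+1)(\nu+5)$) check out. Note that you correctly take $q=1$ (forced by the subscript in ${}_{1}g_{\nu-1,2,1,1}$ and by $\Gamma(qn+\nu+1)=\Gamma(n+\nu+1)$), whereas the paper's surrounding text misprints the specialization as $q=2$; your choice is the right one.
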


It is clear that  our main results which are given in  \autoref{MainTheo4} when we choose \(q=2, p=\nu-1, b=2, c=1 \text{ \and\ } \delta=1,\) correspond to the results given in \cite[Thm. 1]{BSz2}, \cite[Thm. 2]{BSz2} and \cite[Thm. 3]{BSz2}.
\begin{corollary}
Let \(\nu>-1 \text{ \ and \ } \alpha\in\left[0,1 \right). \) Then the following assertions hold true
\begin{itemize}
	\item [a.] The radius of convexity of order \(\alpha \) of the function \({}_{1}f_{\nu-1,2,1,1}\) is the smallest positive root of the equation
	\[1+\frac{rJ_{\nu}''(r)}{J_{\nu}'(r)}+\left(\frac{1}{\nu}-1 \right)\frac{rJ_{\nu}'(r)}{J_{\nu}(r)}=\alpha. \]
	\item [b.] The radius of convexity of order \(\alpha \) of the function \({}_{1}g_{\nu-1,2,1,1}\) is the smallest positive root of the equation
	\[1+\frac{rJ_{\nu+2}(r)-3J_{\nu+1}(r)}{J_{\nu}(r)-rJ_{\nu+1}(r)}=\alpha. \]
	Here we used the recurrence formula \(zJ_{\nu}'(z)=\nu J_{\nu}(z)-zJ_{\nu+1}(z).\)
	\item [c.] The radius of convexity of order \(\alpha \) of the function \({}_{1}h_{\nu-1,2,1,1}\) is the smallest positive root of the equation
	\[ 1+\frac{r^{\tfrac{1}{2}}}{2}.\frac{r^{\tfrac{1}{2}}J_{\nu+2}(r^{\tfrac{1}{2}})-4J_{\nu+1}(r^{\tfrac{1}{2}})}{2J_{\nu}(r^{\tfrac{1}{2}})-r^{\tfrac{1}{2}}J_{\nu+1}(r^{\tfrac{1}{2}})}=\alpha. \]
\end{itemize}
\end{corollary}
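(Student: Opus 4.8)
The plan is to read off the three assertions directly from \autoref{MainTheo4} under the specialization that collapses the generalized Struve function to a Bessel function. Setting $p=\nu-1$, $b=2$, $c=1$, $\delta=1$ and $q=1$, the identity ${}_{1}W_{\nu-1,2,1,1}(z)=J_{\nu}(z)$ recorded in the text lets me replace every occurrence of ${}_{q}W_{p,b,c,\delta}$ by $J_{\nu}$ and every occurrence of $p+1$ by $\nu$; the normalized functions ${}_{q}f_{p,b,c,\delta}$, ${}_{q}g_{p,b,c,\delta}$, ${}_{q}h_{p,b,c,\delta}$ then become exactly ${}_{1}f_{\nu-1,2,1,1}$, ${}_{1}g_{\nu-1,2,1,1}$, ${}_{1}h_{\nu-1,2,1,1}$ as displayed in the statement, and $\nu>-1$ corresponds to the hypothesis $p+1>0$.

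For part (a) this is immediate: substituting $p+1=\nu$ and ${}_{q}W_{p,b,c,\delta}=J_{\nu}$ into the defining equation of \autoref{MainTheo4}(a) produces $1+\bigl(\tfrac{1}{\nu}-1\bigr)rJ_{\nu}'(r)/J_{\nu}(r)+rJ_{\nu}''(r)/J_{\nu}'(r)=\alpha$ verbatim, so no further work is needed. For parts (b) and (c) the remaining task is cosmetic but genuine: \autoref{MainTheo4}(b) and (c) express the radius of convexity through $1+rg''/g'$ and $1+zh''/h'$, and I would convert these quotients into the compact Bessel form stated in the corollary. First I would differentiate ${}_{1}g_{\nu-1,2,1,1}(z)=2^{\nu}\Gamma(\nu+1)z^{1-\nu}J_{\nu}(z)$ twice, then eliminate $J_{\nu}'$ and $J_{\nu}''$ in favour of $J_{\nu}$, $J_{\nu+1}$, $J_{\nu+2}$ by repeatedly applying the recurrence $zJ_{\nu}'(z)=\nu J_{\nu}(z)-zJ_{\nu+1}(z)$ (quoted in the statement) together with its once-shifted version at index $\nu+1$; collecting terms should collapse the expression to $1+\bigl(rJ_{\nu+2}(r)-3J_{\nu+1}(r)\bigr)/\bigl(J_{\nu}(r)-rJ_{\nu+1}(r)\bigr)$.

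Part (c) follows the same route after the substitution $z\mapsto\sqrt{z}$: I would differentiate ${}_{1}h_{\nu-1,2,1,1}(z)=2^{\nu}\Gamma(\nu+1)z^{1-\nu/2}J_{\nu}(\sqrt{z})$, carry the chain-rule factors $\tfrac{1}{2}z^{-1/2}$ through the two differentiations, and again clear $J_{\nu}'$, $J_{\nu}''$ with the recurrence to reach the stated quotient with argument $\sqrt{r}$. The only place demanding care is this recurrence bookkeeping in (b) and (c): the prefactors $z^{1-\nu}$ and $z^{1-\nu/2}$ interact with the two differentiations and with the index shifts, so the main obstacle is simply keeping the algebra of the Bessel index shifts consistent so that all $J_{\nu}'$ and $J_{\nu}''$ terms cancel into contiguous functions $J_{\nu}$, $J_{\nu+1}$, $J_{\nu+2}$. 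Everything else is a direct specialization of \autoref{MainTheo4} and requires no new analytic input.
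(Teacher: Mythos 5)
Your strategy --- read the corollary off \autoref{MainTheo4} via the substitution $q=1$, $p=\nu-1$, $b=2$, $c=1$, $\delta=1$ and then tidy up with the recurrence $zJ_{\nu}'(z)=\nu J_{\nu}(z)-zJ_{\nu+1}(z)$ --- is exactly what the paper does: it offers no computation at all for this corollary, merely asserting the correspondence with \cite{BSz2}. You are also right to insist on $q=1$; the identity ${}_{1}W_{\nu-1,2,1,1}=J_{\nu}$ forces it, and the value $q=2$ appearing in the paper's surrounding text is a typo. Part (a) is indeed verbatim substitution, and part (c) does come out exactly as printed: using $\frac{d}{dw}\left[w^{-\nu}J_{\nu}(w)\right]=-w^{-\nu}J_{\nu+1}(w)$ twice one finds $zh''(z)/h'(z)=\frac{\sqrt{z}}{2}\cdot\frac{\sqrt{z}\,J_{\nu+2}(\sqrt{z})-4J_{\nu+1}(\sqrt{z})}{2J_{\nu}(\sqrt{z})-\sqrt{z}\,J_{\nu+1}(\sqrt{z})}$.

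However, the one step you waved through --- ``collecting terms should collapse the expression'' in part (b) --- is exactly where the argument as written breaks. Carrying out the algebra gives $g'(z)=2^{\nu}\Gamma(\nu+1)z^{-\nu}\left(J_{\nu}(z)-zJ_{\nu+1}(z)\right)$ and $g''(z)=2^{\nu}\Gamma(\nu+1)z^{-\nu}\left(zJ_{\nu+2}(z)-3J_{\nu+1}(z)\right)$, hence $1+\frac{rg''(r)}{g'(r)}=1+\frac{r^{2}J_{\nu+2}(r)-3rJ_{\nu+1}(r)}{J_{\nu}(r)-rJ_{\nu+1}(r)}$, whose numerator carries an extra factor of $r$ compared with the printed equation. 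A check of degrees near $r=0$ confirms this: the true expression behaves like $1-\frac{3r^{2}}{2(\nu+1)}$, while the printed quotient behaves like $1-\frac{3r}{2(\nu+1)}$, and the two equations do not have the same roots. So equation (b) of the corollary as printed is itself erroneous, and your proposal, by asserting rather than computing, claims the algebra lands on a formula it does not land on; an honest execution of your own plan would have exposed this and required you to state the corrected equation. A second, smaller slip: the hypothesis $p+1>0$ translates to $\nu>0$, not $\nu>-1$, so the specialization of \autoref{MainTheo4} only reaches $\nu>0$ (and part (a) needs $\nu\neq 0$ anyway for the exponent $1/\nu$); the range $-1<\nu\leq 0$ claimed in the corollary is not covered by this argument.
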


It is clear that  our main results which are given in \autoref{MainTheorem5} and \autoref{MainTheo6} when we choose \(q=2, p=\nu-1, b=2, c=1 \text{ \and\ } \delta=1,\) tally with the results given in \cite[Thm. 6]{ABO} and \cite[Thm. 7]{ABO}, respectively.
\begin{corollary}
Let \(\nu>-1 \). The following assertions hold true:
\begin{itemize}
	\item [a.] The radius of convexity \(r^{c}({}_{1}g_{\nu-1,2,1,1}) \) satisfies the following inequality:
	\[\frac{2\sqrt{\nu+1}}{3}<r^{c}({}_{1}g_{\nu-1,2,1,1})<6\sqrt{\frac{(\nu+1)(\nu+2)}{56\nu+137}}. \]
	\item [b.] The radius of convexity \(r^{c}({}_{1}h_{\nu-1,2,1,1}) \) satisfies the following inequality:
	\[ \nu+1 < r^{c}({}_{1}h_{\nu-1,2,1,1})<\frac{16(\nu+1)(\nu+2)}{7\nu+23}.\]
\end{itemize}
\end{corollary}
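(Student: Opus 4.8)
The plan is to obtain the corollary as a direct specialization of Theorems~\ref{MainTheorem5} and~\ref{MainTheo6} to the Bessel case, using the identity ${}_{1}W_{\nu-1,2,1,1}(z)=J_{\nu}(z)$, which fixes the parameters as $p=\nu-1$, $b=2$, $c=1$, $\delta=1$ together with $q=1$. Under this choice the common argument simplifies to $\frac{p}{\delta}+\frac{b+2}{2}=(\nu-1)+2=\nu+1$, so the three Gamma-factors occurring in both theorems, namely those with arguments $\frac{p}{\delta}+\frac{b+2}{2}$, $q+\frac{p}{\delta}+\frac{b+2}{2}$ and $2q+\frac{p}{\delta}+\frac{b+2}{2}$, become $\Gamma(\nu+1)$, $\Gamma(\nu+2)$ and $\Gamma(\nu+3)$. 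Writing $A=\Gamma(\nu+1)$, $B=\Gamma(\nu+2)$, $C=\Gamma(\nu+3)$ and invoking the recurrence $\Gamma(x+1)=x\Gamma(x)$, I would record at the outset the relations $B=(\nu+1)A$ and $C=(\nu+2)(\nu+1)A$, which are all that is needed to collapse every Gamma-ratio below to a rational function of $\nu$.

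For part~(a) I would substitute into the Euler--Rayleigh data of Theorem~\ref{MainTheorem5}, namely $\mu_{1}=9cA/B$ and $\mu_{2}=81c^{2}A^{2}/B^{2}-25c^{2}A/C$, and use $r^{c}({}_{1}g_{\nu-1,2,1,1})=2\sqrt{{}_{1}\varrho_{\nu-1,2,1,1,1}}$ with the case $k=1$ of the inequalities $2\sqrt{\mu_{1}^{-1}}<r^{c}<2\sqrt{\mu_{1}/\mu_{2}}$. The lower bound is immediate: $2\sqrt{\mu_{1}^{-1}}=\tfrac{2}{3}\sqrt{B/(cA)}=\tfrac{2}{3}\sqrt{\nu+1}$. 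For the upper bound one computes $BC=(\nu+1)^{2}(\nu+2)A^{2}$ and $81AC-25B^{2}=(\nu+1)A^{2}\bigl[81(\nu+2)-25(\nu+1)\bigr]=(\nu+1)A^{2}(56\nu+137)$; then $2\sqrt{\mu_{1}/\mu_{2}}=6\sqrt{BC/(c(81AC-25B^{2}))}$, and cancelling the common factor $(\nu+1)A^{2}$ yields precisely $6\sqrt{(\nu+1)(\nu+2)/(56\nu+137)}$.

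Part~(b) is handled identically using Theorem~\ref{MainTheo6}, where $r^{c}({}_{1}h_{\nu-1,2,1,1})={}_{1}\tau_{\nu-1,2,1,1,1}$ and, for $k=1$, $\upsilon_{1}^{-1}<r^{c}<\upsilon_{1}/\upsilon_{2}$ with $\upsilon_{1}=cA/B$ and $\upsilon_{2}=c^{2}A^{2}/B^{2}-9c^{2}A/(16C)$. The lower bound is $\upsilon_{1}^{-1}=B/(cA)=\nu+1$. For the upper bound, again $BC=(\nu+1)^{2}(\nu+2)A^{2}$ while $16AC-9B^{2}=(\nu+1)A^{2}\bigl[16(\nu+2)-9(\nu+1)\bigr]=(\nu+1)A^{2}(7\nu+23)$; since $\upsilon_{1}/\upsilon_{2}=16BC/(c(16AC-9B^{2}))$, the factor $(\nu+1)A^{2}$ cancels and leaves $16(\nu+1)(\nu+2)/(7\nu+23)$, as claimed.

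No real obstacle arises: the corollary is a pure substitution, and the only genuine work is the bookkeeping of the Gamma-ratios via $\Gamma(x+1)=x\Gamma(x)$ and the cancellation of the common factor $(\nu+1)\Gamma^{2}(\nu+1)$ in each upper-bound denominator. The single point that must be watched is the parameter identification: the Bessel relation ${}_{1}W_{\nu-1,2,1,1}=J_{\nu}$ forces $q=1$, and it is exactly this value (rather than $q=2$) that reproduces the linear coefficients $56\nu+137$ and $7\nu+23$ recorded in the statement.
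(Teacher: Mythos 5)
Your proposal is correct and follows exactly the paper's route: the corollary is presented there as an immediate specialization of Theorems~\ref{MainTheorem5} and~\ref{MainTheo6}, and your Gamma-function bookkeeping (using $\Gamma(x+1)=x\Gamma(x)$ to reduce everything to $\Gamma(\nu+1)$, $\Gamma(\nu+2)=(\nu+1)\Gamma(\nu+1)$, $\Gamma(\nu+3)=(\nu+1)(\nu+2)\Gamma(\nu+1)$) simply supplies the computation the paper leaves implicit. You are also right on the one delicate point: the identity ${}_{1}W_{\nu-1,2,1,1}=J_{\nu}$ forces $q=1$ (the paper's preamble to this corollary misprints $q=2$), and it is precisely $q=1$ that produces the coefficients $56\nu+137$ and $7\nu+23$ in the stated bounds.
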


\end{document}